\documentclass[sn-mathphys]{JORSC-jnl}

\usepackage{graphicx}%
\usepackage{multirow}%
\usepackage{amsmath,amssymb,amsfonts}%
\usepackage{amsthm}%
\usepackage{mathrsfs}%
\usepackage[title]{appendix}%
\usepackage{xcolor}%
\usepackage{textcomp}%
\usepackage{manyfoot}%
\usepackage{booktabs}%
\usepackage{algorithm}%
\usepackage{algorithmicx}%
\usepackage{algpseudocode}%
\usepackage{listings}%
\usepackage{hyperref}
\usepackage{caption}
\usepackage{placeins}

\newtheoremstyle{thmstyleone} %
{3pt} 
{3pt} 
{\itshape} 
{} 
{\bfseries} 
{.} 
{5mm} 
{} 

\newtheoremstyle{thmstyletwo}
{3pt}
{3pt}
{\normalfont} 
{}
{\bfseries}
{.}
{5mm}
{}

\newtheoremstyle{thmstylethree}
{3pt}
{3pt}
{\normalfont} 
{}
{\bfseries}
{.}
{5mm}
{}

\newtheoremstyle{thmstylefour}
{3pt}
{3pt}
{\normalfont} 
{}
{\itshape} 
{.}
{5mm}
{}

\theoremstyle{thmstyleone}%
\newtheorem{theorem}{Theorem}
\newtheorem{proposition}[theorem]{Proposition}%
\newtheorem{lemma}[theorem]{Lemma}%
\newtheorem{corollary}[theorem]{Corollary}%

\theoremstyle{thmstyletwo}%
\newtheorem{example}{Example}%

\theoremstyle{thmstylethree}%
\newtheorem{definition}{Definition}%

\theoremstyle{thmstylefour}%
\newtheorem{assumption}{Assumption}%
\usepackage{indentfirst}
\usepackage{epstopdf}

\begin{document}
\renewcommand{\qedsymbol}{}
\renewcommand{\equalcont}[1]{\footnotetext{#1}}

\title[Local Convergence Analysis of ADMM for NCO]{Local Convergence Analysis of ADMM for Nonconvex Composite Optimization}


\author[1]{Xi-yuan Xie}\email{xiexy65@mail2.sysu.edu.cn}

\author[1]{Li-hua Yang}\email{mcsylh@mail.sysu.edu.cn}

\author*[2]{Qia Li}\email{liqia@mail.sysu.edu.cn}

\affil[1]{\orgdiv{School of Mathematics}, \orgname{Sun Yat-sen University}, \orgaddress{\street{135 Xingang West Road}, \city{Guangzhou}, \postcode{510275}, \state{Guangdong}, \country{China}}}

\affil[2]{\orgdiv{ School of Computer Science and Engineering, Guangdong Province Key Laboratory of Computational Science}, \orgname{Sun Yat-sen University}, \orgaddress{\street{135 Xingang West Road}, \city{Guangzhou}, \postcode{510275}, \state{Guangdong}, \country{China}}}

\equalcont{The work of Qia Li is supported by the National Natural Science Foundation of China (No.12471098). The work of Li-hua Yang is supported by the Guangdong Basic and Applied Basic Research Foundation (No.2024A1515010988).}


\abstract{
In this paper, we study the local convergence of the standard ADMM scheme for a class of nonconvex composite optimization problems motivated by applications in signal processing and machine learning. The problems are constrained by a closed convex set, while their objective is the sum of a continuously differentiable, possibly nonconvex, smooth term and a polyhedral convex nonsmooth term composed with a linear mapping. Motivated by recent works of Rockafellar, we first provide an elementary proof of a local strong convexity property of the Moreau envelope of polyhedral convex functions on the orthogonal complement of an appropriate subspace. Building on this property, we establish the strong variational sufficiency of the reduced augmented Lagrangian under an appropriate second-order condition. We then derive a descent inequality for the ADMM iterates that is analogous to the classical descent inequality for convex ADMM. For a sufficiently large penalty parameter, and under suitable initialization and local trajectory conditions, we prove that the ADMM sequence converges to a stationary primal-dual point. When the constraint set is polyhedral convex, we further show that the weighted distance of the primal-dual sequence to the local solution set converges Q-linearly, while the primal sequence converges R-linearly. Finally, we present three illustrative examples together with an application-oriented verification for a class of possibly nonconvex quadratic programs, illustrating the role of the second-order condition, the local nature of the convergence theory, and its applicability.}

\keywords{nonconvex composite optimization, ADMM, polyhedral convex function, Moreau envelope, strong variational sufficiency,  local convergence}


\pacs[Mathematics Subject Classification]{90C26,  90C30,  65K05}

\maketitle

\section{Introduction}\label{sec-intro}
Nonconvex composite optimization has attracted great attention due to its applications to various modern image processing and machine learning models, while the alternating direction method of multipliers (ADMM) is one of the most widely used methods for solving this type of problem, see, for example, \cite{lanza2016convex,chan2016plug,shen2019iterative,wang2021limited,gräf2022image,mancino2023decentralized,kumar2024efficient,bui2024stochastic,barber2024convergence}. In this paper, we consider a class of nonconvex composite optimization problems as follows: \begin{align}\label{prob:com}
\min_{x\in{\mathcal{C}}}~~ F(x):=f(x) + g(Ax)
\end{align}
where the mapping $A: \mathbb{R}^n \to \mathbb{R}^m$ is linear, 
$\mathcal{C}\subseteq \mathbb{R}^n$ is a nonempty closed convex set, \mbox{$f: \mathbb{R}^n \to \mathbb{R}$} is continuously differentiable, and \mbox{$g: \mathbb{R}^m \to \mathbb{R} \cup \{\infty\}$} is polyhedral convex. We further assume that \[ \mathrm{dom}~g \cap A(\mathrm{ri}(\mathcal{C}))\neq\emptyset.\]

Problem~\eqref{prob:com} has many important applications in machine learning and signal processing. Below, we present two representative application models.

~

\noindent\textbf{Quadratic programming.} 
Consider the following quadratic optimization problem
\begin{equation}
    \begin{aligned}\label{QP}
    \min_{x\in \mathbb{R}^n}~&\frac{1}{2}x^\top Hx+b^\top x,\\
   \text{ s.t. } &Ex=e,\; Gx\le h
\end{aligned}
\end{equation}
where $H\in\mathbb{R}^{n\times n}$ is symmetric, $E\in\mathbb{R}^{p\times n}$, $G\in\mathbb{R}^{q\times n}$, $b\in\mathbb{R}^n$,  $e\in\mathbb{R}^p$ and  \mbox{$h\in \mathbb{R}^q$}. Such models arise in a variety of applications, including compressed sensing \cite{chen2013sparse}, portfolio selection \cite{fabozzi2008portfolio}, and resource allocation \cite{ibaraki1988resource}. Problem \eqref{QP} can be written as an instance of Problem~\eqref{prob:com} by setting $A=I_n$, $\mathcal{C}=\mathbb{R}^n$, $f(x)=\frac{1}{2}x^\top Hx+b^\top x$, and $g(y)=\delta_{\mathcal{Q}}(y)$ with 
\[\mathcal{Q}=\{x\in\mathbb{R}^n\mid Ex=e,\; Gx\le h\}.\]

~

\noindent\textbf{Dictionary Learning Model.}
Let $X=[x_1,\ldots,x_m]\in\mathbb{R}^{n\times m}$ be a data matrix. A standard dictionary learning model \cite{mairal2010online} is
\begin{align}\label{exp-prob5}
    \min_{D\in \mathcal{D},\,W\in\mathbb{R}^{p\times m}}~\frac{1}{2}\|X-DW\|_F^2+\mu\|W\|_1,
\end{align}
where $\mu>0$ is a regularization parameter, $\|W\|_1:=\sum_{i,j}\vert W_{ij}\vert$, and
\[
\mathcal{D}:=\{D\in\mathbb{R}^{n\times p}\mid \|d_j\|_2\leq 1,~j=1,\ldots,p\}
\] is the convex constraint set on the dictionary columns. Problem \eqref{exp-prob5} is an instance of Problem~\eqref{prob:com} with
$f(D,W)=\frac{1}{2}\|X-DW\|_F^2$, linear mapping $A:\mathbb{R}^{n\times p}\times\mathbb{R}^{p\times m}\to \mathbb{R}^{p\times m}$ satisfying $A(D,W)=W$, $g(W)=\mu\|W\|_1$, and $\mathcal{C}=\mathcal{D}\times\mathbb{R}^{p\times m}$.

ADMM can be readily applied to Problem~\eqref{prob:com}. We first reformulate \eqref{prob:com} into the following equality-constrained problem:
\begin{equation}\label{prob:linear-eq}
\begin{aligned}
\min&~~ f(x)+\delta_{\mathcal{C}}(x) + g(y)\\
s.t.&~~Ax=y,
\end{aligned}  
\end{equation}
where $\delta_{\mathcal{ C}}$ is the indicator function of the closed convex set $\mathcal{C}$. The augmented Lagrangian function of Problem \eqref{prob:linear-eq} is then defined by  
\begin{align*}
L_{\beta}(x,y,\lambda):=&f(x)+\delta_{\mathcal{C}}(x)+g(y)\\&+\langle\lambda,Ax-y\rangle+\frac{\beta}{2}\|Ax-y\|^2,
\end{align*}
where $\lambda\in\mathbb{R}^m$ is the Lagrange multiplier and $\beta > 0 $ is a penalty parameter.
Now, following the standard procedure of ADMM, we obtain an ADMM iterative scheme for solving \eqref{prob:linear-eq} as follows:
\begin{subequations}\label{alg:classical ADMM1}
\begin{align}
    x^{k+1}&\in\arg\min_{x\in \mathcal{C}} f(x)+\langle \lambda^k,Ax\rangle+\frac{\beta}{2}\|Ax-y^k\|^2, \label{alg:cl1-ADMM1}\\
    y^{k+1}&=\arg\min_y g(y)+\langle\lambda^k,-y\rangle+\frac{\beta}{2}\|Ax^{k+1}-y\|^2,\label{alg:cl1-ADMM2}\\
    \lambda^{k+1}&=\lambda^k+\beta(Ax^{k+1}-y^{k+1}).\label{alg:cl1-ADMM3}
\end{align}
\end{subequations}

In the case that $f$ is convex, the convergence of \eqref{alg:classical ADMM1} and other variants of ADMM is extensively studied and  well understood, see, for example, \cite{eckstein1992douglas,boyd2011distributed,chambolle2011first,he20121,glowinski2014alternating,chen2015inertial,eckstein2015understanding,yang2016linear,chen2016direct,hong2017linear,davis2017faster,lin2022alternating,sun2021efficient,han2022survey}. However, when $f$ is nonconvex, the convergence analysis of ADMM is generally much more challenging. Although there are numerous studies on the convergence of ADMM for nonconvex composite optimization \cite{li2015global,hong2016convergence,wang2018convergence,jiang2019structured,wang2019global,zhang2019fundamental,boct2020proximal,jia2021local}, we find they are not applicable to investigating the convergence of \eqref{alg:classical ADMM1} for solving \eqref{prob:com}, which is mainly due to the presence of the convex constraint set $\mathcal{C}$. In fact, for Problem~\eqref{prob:com} with $\mathcal{C}=\mathbb{R}^n$, some results in \cite{li2015global,wang2019global,jia2021local} are available for analyzing the convergence of a variant of \eqref{alg:classical ADMM1}, which simply changes the order of minimizing $x$ and $y$ as follows
\begin{subequations}\label{alg:classical ADMM2}
\begin{align}
 y^{k+1}&=\arg\min_y g(y)+\langle\lambda^k,-y\rangle+\frac{\beta}{2}\|Ax^{k}-y\|^2,\label{alg:cl2-ADMM1}\\
    x^{k+1}&\in\arg\min_{x\in \mathcal{C}} f(x)+\langle \lambda^k,Ax\rangle+\frac{\beta}{2}\|Ax-y^{k+1}\|^2, \label{alg:cl2-ADMM2}\\
    \lambda^{k+1}&=\lambda^k+\beta(Ax^{k+1}-y^{k+1}).\label{alg:cl2-ADMM3}
\end{align}
\end{subequations}
Specifically, these results indicate that if $A$ is a nonsingular square matrix, then any cluster point of the solution sequence generated by \eqref{alg:classical ADMM2} is a stationary point, provided that the penalty parameter $\beta$ is chosen sufficiently large. Moreover, in \cite{li2015global,wang2019global} global convergence of the entire sequence is established by assuming the KL property \cite{attouch2010proximal}, while in \cite{jia2021local} global convergence and local linear convergence are shown via an error bound condition. To the best of our knowledge, the existing results do not
directly cover the ADMM schemes~\eqref{alg:classical ADMM1} and \eqref{alg:classical ADMM2} under the present assumptions when \(\mathcal C\neq\mathbb R^n\).

Recently, in \cite{rockafellar2023convergence} Rockafellar conducts local convergence analysis of the augmented Lagrangian method (ALM) for nonconvex composite optimization which restricts $\mathcal{C}=\mathbb{R}^n$ and replaces $g\circ A$ of Problem~\eqref{prob:com} by the composition of a convex function and a nonlinear mapping. The principal idea therein is to assume that  the \textit{strong~variational~sufficiency~condition} \cite[Theorem~1]{rockafellar2023augmented} holds for the augmented Lagrangian function, which actually reduces to the strong second-order sufficient condition (SOSC) when applied to the classical nonlinear programming problems \cite[Theorem~5]{rockafellar2023augmented}. Subsequently, the authors in \cite{zhou2025some} extend the (strong) variational sufficiency condition to nonsmooth composite optimization problems on Riemannian manifolds. In \cite{van2026convergence}, the authors use a weaker version of the SOSC to analyze the local convergence of ALM. 

Note that the convergence issue of ADMM scheme \eqref{alg:classical ADMM1}  remains open, and motivated by the work \cite{rockafellar2023augmented,rockafellar2023convergence}, we adapt the notion of strong variational sufficiency to Problem~\eqref{prob:com} and prove the local convergence of \eqref{alg:classical ADMM1} under this condition. To the best of our knowledge, this is the first work to study the convergence of ADMM for nonconvex composite optimization through strong variational sufficiency. The main contributions are summarized as follows. First, we provide an elementary proof of a local strong convexity property of the Moreau envelope of polyhedral convex functions on the orthogonal complement of an appropriate subspace. More precisely, we establish a local Hessian
lower bound determined by the orthogonal projection onto this subspace. Building on this property, we show that the strong variational sufficiency holds for Problem~\eqref{prob:com} under a second-order condition requiring the Hessian of the smooth term to be positive definite on an appropriate subspace. Second, using the resulting local structure in which the reduced augmented Lagrangian is strongly convex in \(x\) and concave in \(\lambda\), we derive a descent inequality for the ADMM iterates that parallels the classical analysis of convex ADMM. For sufficiently large penalty parameters, we prove the local convergence of ADMM scheme~\eqref{alg:classical ADMM1} under suitable initialization and local trajectory conditions. When \(\mathcal C\) is polyhedral convex, we further establish Q-linear convergence of a weighted distance to the local primal-dual solution set and R-linear convergence of the primal sequence. Finally, we present three illustrative examples together with an application-oriented verification for a class of possibly nonconvex quadratic programs. These examples illustrate the role of Assumption~\ref{assp:ADMM}, the local nature of the convergence theory, and the applicability of our results.

The rest of the paper is organized as follows. Section \ref{sec:Pre} presents preliminaries that will be used in this paper. Section \ref{sec:condition} discusses the strong variational sufficiency of Problem~\eqref{prob:com}. Section \ref{sec:local} establishes the convergence of ADMM scheme \eqref{alg:classical ADMM1}. Section \ref{sec:example} presents three illustrative examples together with a verification for a class of practical quadratic programs.

\section{Notation and Preliminaries}\label{sec:Pre}
\textbf{Notation.}~The set of extended real numbers is denoted by $\overline{\mathbb{R}}:=\mathbb{R}\cup\{\pm\infty\}$.
Let $\|\cdot\|$ denote the Euclidean norm on $\mathbb{R}^n$.
For a function $\phi:\mathbb{R}^n\to\overline{\mathbb{R}}$, the classical directional derivative at $x$ along $d\in\mathbb{R}^n$ is denoted by $\phi'(x;d)$.
The set of global minimizers of $\phi$ is denoted by $\operatorname{argmin}\phi:=\{x\in\mathbb{R}^n\mid \phi(x)=\inf_z \phi(z)\}$.
The conjugate function of $\phi$ is denoted by $\phi^*$, and its effective domain by $\mathrm{dom}~\phi:=\{x\in\mathbb{R}^n\mid \phi(x)<\infty\}$.
In addition, $\partial\phi$, $\nabla\phi$ and $\nabla^2\phi$ denote the subdifferential, gradient and Hessian of $\phi$, respectively, wherever they exist.

For a nonempty closed convex set $\Omega\subset\mathbb{R}^n$, let $\Omega^o:=\{w\in\mathbb{R}^n\mid w^{\top}v\leq 0,\ \forall v\in\Omega\}$, let $\delta_\Omega$ be the indicator function of $\Omega$, let $\Pi_{\Omega}$ be the projection operator onto $\Omega$, and let $T_{\Omega}(x)$ and $N_{\Omega}(x)$ be the tangent cone and normal cone to $\Omega$ at $x\in\Omega$, respectively.
Moreover, let $\mathrm{aff}_{\Omega}$ denote the smallest affine space containing $\Omega$ and define $V_{\Omega}:=\mathrm{aff}_{\Omega}-x$ for any $x\in\Omega$.
We denote by $\mathrm{int}(\Omega)$ and $\mathrm{ri}(\Omega)$ the interior and relative interior of $\Omega$, and by $\mathrm{conv}(\Omega)$ the convex hull of $\Omega$ (the smallest convex set containing $\Omega$). For $x\in\mathbb{R}^n$, let $\mathrm{dist}(x,\Omega):=\inf_{y\in\Omega}\|x-y\|$.

For symmetric matrices $A,B\in\mathbb{R}^{n\times n}$, $A\succeq B$ means that $A-B$ is positive semidefinite.
For $x\in\mathbb{R}^n$ and $r>0$, let $\mathbb{B}(x,r):=\{y\in\mathbb{R}^n\mid \|y-x\|\leq r\}$.

\textbf{Preliminaries.}
For Problem~\eqref{prob:com}, it holds that for any $ x\in\mathbb{R}^n$, 
\begin{align}\label{subgrad-sum}
\partial(g(Ax)+\delta_{\mathcal{C}}(x))=\partial\delta_{\mathcal{C}}(x)+A^{\top}\partial g(Ax),
\end{align} from \cite[Theorem~23.8 and 23.9]{rockafellar1997convex}. 
Furthermore, suppose that $\bar{x}$ is a local minimizer of Problem~\eqref{prob:com}. Then, by~\eqref{subgrad-sum} and \cite[Theorem~3.72]{Beck2017First}, there exists a multiplier $\bar{\lambda}$ such that
\begin{align}\label{first order condition1}
0\in\partial\delta_{\mathcal{C}}(\bar{x})+\nabla f(\bar{x})+A^{\top}\bar{\lambda},~~\bar{\lambda}\in \partial g(A\bar{x}).
\end{align}
For a function $\varphi:\mathbb{R}^m\to\overline{\mathbb{R}}$, its Moreau envelope function is defined as 
\[M^{\mu}_{\varphi}(x):=\inf_y~\varphi(y)+\frac{1}{2\mu}\|y-x\|^2,\]
where the constant $\mu>0$, and its proximal mapping is defined by  
\[\mathrm{prox}_{\mu \varphi}(x):=\arg\min_y~\varphi(y)+\frac{1}{2\mu}\|y-x\|^2.\]
\begin{proposition}[{\cite[Proposition~6]{eckstein2015understanding}}]\label{Moreau-decom} Let $\mu>\nolinebreak 0$ and $\varphi:\mathbb{R}^m\to\overline{\mathbb{R}}$ be a proper closed convex function. Then for every $w\in\mathbb{R}^m$, there exists a unique pair $(u_w,\lambda_{w})$ satisfying \[w=u_{w}+ \mu\lambda_{w},~~u_{w}\in\mathrm{dom}~\varphi, ~~\lambda_{w}\in\partial \varphi(u_{w}),\]
    where $u_w=\mathrm{prox}_{\mu \varphi}(w)$ and $\mu\lambda_{w}=\mathrm{prox}_{(\mu \varphi)^*}(w)$. \end{proposition}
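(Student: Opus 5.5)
The plan is to reduce everything to the optimality condition of the strongly convex problem defining the proximal mapping, and to use Moreau's identity for the second component.

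\emph{Existence.} Fix $w\in\mathbb{R}^m$ and set $u_w:=\mathrm{prox}_{\mu\varphi}(w)$. Since $\varphi$ is proper, closed and convex, the function $y\mapsto \varphi(y)+\frac{1}{2\mu}\|y-w\|^2$ is proper, closed and strongly convex. Hence it has a unique minimizer, so $u_w$ is well defined and $u_w\in\mathrm{dom}~\varphi$. The subdifferential sum rule applies because the quadratic term is finite everywhere. Fermat's rule then gives
\[
0\in\partial\varphi(u_w)+\tfrac{1}{\mu}(u_w-w).
\]
Define $\lambda_w:=(w-u_w)/\mu$. Then $\lambda_w\in\partial\varphi(u_w)$ and $w=u_w+\mu\lambda_w$.

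\emph{Uniqueness.} Let $(u,\lambda)$ be any pair with $w=u+\mu\lambda$, $u\in\mathrm{dom}~\varphi$ and $\lambda\in\partial\varphi(u)$. Then
\[
0\in\partial\varphi(u)+\tfrac{1}{\mu}(u-w),
\]
which is the sufficient optimality condition for the strongly convex problem above. So $u$ is its unique minimizer, $u=\mathrm{prox}_{\mu\varphi}(w)=u_w$, and consequently $\lambda=(w-u)/\mu=\lambda_w$.

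\emph{Identification of $\mu\lambda_w$.} It remains to show $\mu\lambda_w=\mathrm{prox}_{(\mu\varphi)^*}(w)$. Put $\psi:=\mu\varphi$, which is proper, closed and convex with $\partial\psi=\mu\partial\varphi$. Then $v:=\mu\lambda_w\in\partial\psi(u_w)$. By the conjugate subgradient inversion (Fenchel--Young equality, valid for closed proper convex $\psi$), this is equivalent to $u_w\in\partial\psi^*(v)$. Since $u_w=w-v$, we get
\[
0\in\partial\psi^*(v)+(v-w),
\]
which is exactly the optimality condition for $\mathrm{prox}_{\psi^*}(w)=\arg\min_z \psi^*(z)+\frac12\|z-w\|^2$. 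Because $\psi^*$ is proper, closed and convex, this minimizer is unique, so $v=\mathrm{prox}_{(\mu\varphi)^*}(w)$.

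The only delicate point is this last step. It needs the scaling convention: the proximal mapping of $(\mu\varphi)^*$ is taken with unit parameter, and $(\mu\varphi)^*(z)=\mu\varphi^*(z/\mu)$. It also needs the inversion rule $\partial\psi^{-1}=\partial\psi^*$, which holds because $\psi$ is closed. Everything else is routine strong-convexity and Fermat-rule bookkeeping.
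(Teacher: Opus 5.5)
Your proof is correct and complete. Existence and uniqueness of $u_w$ come from strong convexity of $\varphi+\frac{1}{2\mu}\|\cdot-w\|^2$. Uniqueness of the pair follows because any admissible $(u,\lambda)$ satisfies the Fermat condition, which is sufficient for this convex problem. Only the trivial inclusion $\partial\varphi(u)+\frac{1}{\mu}(u-w)\subseteq\partial\bigl(\varphi+\frac{1}{2\mu}\|\cdot-w\|^2\bigr)(u)$ is needed there. The identification $\mu\lambda_w=\mathrm{prox}_{(\mu\varphi)^*}(w)$ follows from $(\partial\psi)^{-1}=\partial\psi^*$ for $\psi=\mu\varphi$, together with uniqueness of the minimizer of $\psi^*+\frac12\|\cdot-w\|^2$. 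The scaling formula $(\mu\varphi)^*(z)=\mu\varphi^*(z/\mu)$ you mention is never used. The only convention that matters is that $\mathrm{prox}_{(\mu\varphi)^*}$ carries unit parameter, and you read that correctly.

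The paper gives no argument of its own. It imports the statement from \cite[Proposition~6]{eckstein2015understanding}, and it is Moreau's classical decomposition theorem. Compared with the classical conjugacy-based treatment, your route is more elementary: it uses only strong convexity, Fermat's rule, and the conjugate subgradient theorem, which matches the level of Section~2.

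What the conjugacy route buys is a fact the paper relies on immediately afterwards in \eqref{eq:partial} and \eqref{eq:moreau-grad}, namely $\nabla M^{\mu}_{\varphi}(w)=\lambda_w$. Writing $M^{\mu}_{\varphi}=\bigl(\varphi^*+\frac{\mu}{2}\|\cdot\|^2\bigr)^*$ shows at once that $M^{\mu}_{\varphi}$ is differentiable. Its gradient is the unique maximizer of $\langle\lambda,w\rangle-\varphi^*(\lambda)-\frac{\mu}{2}\|\lambda\|^2$, whose optimality condition $w-\mu\lambda\in\partial\varphi^*(\lambda)$ is exactly your characterization of $\lambda_w$. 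With your approach, that gradient formula is a separate standard fact that would still have to be supplied. It is not, however, part of the statement you were asked to prove.
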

    By Proposition \ref{Moreau-decom} and \cite[Theorem~4.20]{Beck2017First}, 
for a proper closed convex function $\varphi$, it holds for any $\mu>0$  that 
\begin{equation}\label{eq:partial}
    \begin{aligned}
  x\in\partial \varphi^*(\lambda)  &\Longleftrightarrow~\lambda\in\partial \varphi(x)\\
  &\Longleftrightarrow~\lambda=\nabla M^{\mu}_{\varphi}(x+\mu\lambda)
  \\
  &\Longleftrightarrow x=\mathrm{prox}_{\mu \varphi} (x+\mu\lambda).
\end{aligned}
\end{equation}
A differentiable function $\varphi:\mathbb{R}^n\to\overline{\mathbb{R}}$ is convex if and only if 
\begin{align*}
    \langle \nabla \varphi(x)-\nabla \varphi(y), x-y\rangle\geq 0,~~\forall x,y\in\mathbb{R}^n.
\end{align*}
Moreover, according to Rademacher's Theorem \cite[Theorem~9.60]{rockafellar2009variational}, if $\nabla \varphi$ is locally Lipschitz continuous, $\nabla^2 \varphi$ exists almost everywhere. In this case, $\varphi$ is convex if and only if $\nabla^2 \varphi$ is positive semidefinite wherever it exists \cite[Section~3]{rockafellar2023augmented}. Given a nonempty  closed convex set $\Omega\subseteq \mathbb{R}^n$, it holds that for any $x\in\mathbb{R}^n$ and $y\in \Omega$,
\begin{align}\label{convex-proj}
    \langle y- \Pi_{\Omega}(x),x-\Pi_{\Omega}(x)\rangle\leq 0.
\end{align}

A set $\mathcal{P} \subseteq \mathbb{R}^n$ is called a  polyhedral convex set if it can be represented as the intersection of some finite collection of closed half-spaces. That is, there exist a matrix $B \in \mathbb{R}^{m \times n}$ and a vector $c \in \mathbb{R}^m$ such that $\mathcal{P}= \left\{ x \in \mathbb{R}^n \mid Bx \leq c \right\}$, where $Bx \leq c$ denotes the system of inequalities $B_i^\top x \leq c_i$ for $i = 1, 2, \dots, m$, where $B_i^\top$ is the $i$-th row of $B$. In particular, $\mathcal{P}$ is called a convex cone if $c=0$. Let $I(x):=\{i\mid B_i^{\top}x=c_i\}$, then tangent cone $T_\mathcal{P}(x)=\{v\in\mathbb{R}^n\mid B_i^{\top}v\leq0, i\in I(x)\}$ and  normal cone $N_{\mathcal{P}}(x)=\{\sum\lambda_i B^{\top}_i\mid \lambda_i\geq 0, i\in I(x)\}$, which are polyhedral convex cones \cite[Theorem~6.46]{rockafellar2009variational}.
A polyhedral convex function $\phi:\mathbb{R}^n\to\overline{\mathbb{R}}$ is a convex function whose epigraph is a polyhedral convex set. Equivalently, $\mathrm{dom} \phi$ is a polyhedral convex set and $\phi$ is piecewise affine on its domain \cite[Corollary~19.1.2]{rockafellar1997convex}. Namely, $\phi$ has the form  
\begin{align*}
    \phi(x)=\max_{1\leq i\leq m}\{a_i^{\top}x+b_i\}+\delta_{\mathcal{P}}(x)
\end{align*}
where $\mathcal{P}$ is a polyhedral convex set. 

With piecewise linearity, for $x\in\mathrm{dom} ~ \phi$ and $w\in\mathbb{R}^n$, the one-sided directional derivatives have the distinguishing feature as: $\forall{\rho_0}>0,~\exists\varepsilon>0$, such that
\begin{align}\label{eq:one-sided derivate} \phi'(x;w)=\frac{\phi(x+tw)-\phi(x)}{t},
\end{align}
where $\|w\|<{\rho_0},~0<t<\varepsilon$.
In addition, $\partial \phi(x)$ is a polyhedral set \cite[Theorem~23.10]{rockafellar1997convex}, and if $\phi'(x;w)<\infty$, there exists $\lambda \in \partial \phi(x)$ such that $\phi'(x;w)=\langle\lambda,w\rangle$.  Moreover, it holds that $\forall{\rho_0}>0,\exists\varepsilon>0$, such that if $\eta\in T_{\partial {\phi}(x)}(\lambda)$ with $\|\eta\|<{\rho_0}$, and $0<t<\varepsilon$, then
\begin{align}\label{eq:diff-tan}
\lambda+t\eta \in \partial {\phi}(x),
\end{align}

 The conjugate function $\phi^*$ of the polyhedral convex function $\phi$ is also a polyhedral convex function {\cite[Theorem~19.2]{rockafellar1997convex}}, and it holds that 
\begin{align}\label{polar}
    T_{\partial \phi(y)}(\lambda)=[N_{\partial \phi(y)}(\lambda)]^o= [T_{\partial \phi^*(\lambda)}(y)]^o.
\end{align}
Let \begin{align}\label{tangent-cone}
     T_{{\phi}}(x\mid \lambda):=\{w \mid {\phi}'(x;w)=\langle\lambda,w\rangle\},
     \\T_{{\phi}^*}(\lambda\mid x):=\{\eta \mid {\phi}^{*}{'}(\lambda;\eta)=\langle x,\eta \rangle\}.
 \end{align}
 Note that $\lambda\in\partial {\phi}(x)$ plays the role of a "gradient" restricted to the convex cone $T_{\phi}(x\mid\lambda)$. Clearly, it is possible that $T_{\phi}(x\mid\lambda)=\{0\}$. For example, if ${\phi}(x):=\vert x\vert$ with $x\in\mathbb{R}$, then $T_{\phi}(0\mid \frac{1}{2})=\{0\}$.
For a polyhedral convex function $\phi:\mathbb{R}^n\to\overline{\mathbb{R}}$, by $\lambda\in \partial {\phi}(x)\Longleftrightarrow x\in \partial {\phi}^*(\lambda)$ and \eqref{eq:one-sided derivate}, it holds that
\begin{equation}\label{eq:Tg}
    \begin{aligned}
    &T_{\phi}(x\mid \lambda)
    =\{w\mid \lambda\in\partial {\phi}(x + tw)\text{ for small }t > 0\}=T_{\partial {\phi}^*(\lambda)}(x),\\
&T_{{\phi}^*}(\lambda\mid x)
=\{\eta\mid x\in\partial {\phi}^*(\lambda + t\eta)\text{ for small }t > 0\}=T_{\partial {\phi}(x)}(\lambda).
\end{aligned}
\end{equation}
Through the relations in \eqref{eq:Tg}, the tangent cones $T_{\partial {\phi}^*(\lambda)}(x)$ and $T_{\partial {\phi}(x)}(\lambda)$ can be characterized by $T_{\phi}(x\mid \lambda)$ and $T_{{\phi}^*}(\lambda\mid x)$ respectively, a representation that can be more accessible.

A multifunction $\mathfrak{R}:\mathbb{R}^n\rightrightarrows\mathbb{R}^m$ is said to be \emph{piecewise linear} if its graph 
\[
\operatorname{Gr}(\mathfrak{R}) := \{(x,u)\in\mathbb{R}^n\times\mathbb{R}^m \mid u\in\mathfrak{R}(x)\}
\]
can be expressed as the union of finitely many polyhedral convex sets (as defined above) in the product space $\mathbb{R}^n\times\mathbb{R}^m$ \cite[Definition~2.2]{zheng2014metric}. Note that this notion coincides with \emph{piecewise polyhedral} multifunctions in \cite{rockafellar2009variational}.
In particular, if $\mathcal{C}\subseteq\mathbb{R}^n$ is a polyhedral convex set, its normal cone mapping $N_{\mathcal{C}}$ is piecewise linear; if $g:\mathbb{R}^n\to\overline{\mathbb{R}}$ is a polyhedral convex function, its subdifferential $\partial g$ is also piecewise linear \cite[Proposition~12.30]{rockafellar2009variational}. Moreover, it follows directly from the definition that the class of piecewise linear multifunctions is closed under the addition of a single‑valued affine mapping and under Cartesian products. The following error bound result plays a
fundamental role in our linear convergence rate analysis.
\begin{proposition}[{\cite[Theorem~3.3]{zheng2014metric}}]\label{prop-error}
     Let $\mathfrak{R}:\mathbb{R}^n\rightrightarrows \mathbb{R}^n $ be a piecewise linear multifunction. For any $\eta > 0$, there
exists $\kappa > 0$ such that
\begin{align*}
    \mathrm{dist}(u,\mathfrak{R}^{-1}(0))\leq \kappa \mathrm{dist}(0,\mathfrak{R}(u)),~~\forall~\|u\|\leq\eta.
\end{align*}
\end{proposition}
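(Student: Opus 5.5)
The plan is to reduce the statement to Hoffman's error bound for linear inequality systems, applied separately to each polyhedral piece of the graph. By definition, $\operatorname{Gr}(\mathfrak{R})=\bigcup_{i=1}^{N}P_i$ with each $P_i\subseteq\mathbb{R}^n\times\mathbb{R}^n$ polyhedral convex, say $P_i=\{(u,v)\mid B_iu+C_iv\le c_i\}$. Set $\mathfrak{R}_i(u):=\{v\mid (u,v)\in P_i\}$. Then $\mathfrak{R}(u)=\bigcup_i\mathfrak{R}_i(u)$ and $\mathfrak{R}^{-1}(0)=\bigcup_i\mathfrak{R}_i^{-1}(0)$, so
\[
\mathrm{dist}(0,\mathfrak{R}(u))=\min_{i:\,\mathfrak{R}_i(u)\neq\emptyset}\mathrm{dist}(0,\mathfrak{R}_i(u)).
\]
Hence it suffices to show that for each $i$ with $\mathfrak{R}_i(u)\ne\emptyset$ and $\|u\|\le\eta$ we have $\mathrm{dist}(u,\mathfrak{R}^{-1}(0))\le\kappa_i\,\mathrm{dist}(0,\mathfrak{R}_i(u))$, and then take $\kappa=\max_i\kappa_i$. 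If $\mathfrak{R}(u)=\emptyset$, the right-hand side is $+\infty$ and there is nothing to prove. I assume $\mathfrak{R}^{-1}(0)\neq\emptyset$; otherwise the statement must be read with the usual convention $\mathrm{dist}(\cdot,\emptyset)=+\infty$.

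\emph{Pieces meeting the zero fiber.} Suppose $\mathfrak{R}_i^{-1}(0)\neq\emptyset$. Since $\mathfrak{R}_i(u)$ is a nonempty closed set, choose $v\in\mathfrak{R}_i(u)$ with $\|v\|=\mathrm{dist}(0,\mathfrak{R}_i(u))$. Apply Hoffman's lemma to the nonempty polyhedron
\[
Q_i:=\{(u',v')\mid B_iu'+C_iv'\le c_i,\ v'=0\}.
\]
At the point $(u,v)$ the inequality part is satisfied exactly and the equality residual is $\|v\|$. Hoffman's lemma therefore gives a constant $\kappa_i$, depending only on $(B_i,C_i)$, and a point $(u',0)\in Q_i$ with $\|(u,v)-(u',0)\|\le\kappa_i\|v\|$. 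Since $u'\in\mathfrak{R}_i^{-1}(0)\subseteq\mathfrak{R}^{-1}(0)$, this yields $\mathrm{dist}(u,\mathfrak{R}^{-1}(0))\le\kappa_i\,\mathrm{dist}(0,\mathfrak{R}_i(u))$. This bound holds for all $u$, not only for $\|u\|\le\eta$.

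\emph{Pieces missing the zero fiber.} Here the restriction $\|u\|\le\eta$ is essential, and I expect this to be the main point needing care. Suppose $\mathfrak{R}_i^{-1}(0)=\emptyset$ and the set $S_i:=\{(u,v)\in P_i\mid\|u\|\le\eta\}$ is nonempty. Then $\|v\|$ attains its infimum on $S_i$: $S_i$ is closed, and along any minimizing sequence both $v$ (bounded by the minimizing property) and $u$ (by the constraint $\|u\|\le\eta$) stay bounded. The attained minimum $m_i$ is positive, because $v=0$ never occurs in $P_i$. Moreover $\mathrm{dist}(u,\mathfrak{R}^{-1}(0))\le\eta+\mathrm{dist}(0,\mathfrak{R}^{-1}(0))=:D<\infty$. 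Taking $\kappa_i:=D/m_i$ gives the desired bound on this piece.

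Combining the two cases and setting $\kappa:=\max_i\kappa_i$ over the finitely many pieces completes the argument. The only external tool is Hoffman's lemma; the compactness argument in the second case is what forces the bound to be local rather than global.
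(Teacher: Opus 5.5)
The paper gives no proof of this statement. It imports the result as a black box from Zheng--Ng, so there is no internal argument to compare against.

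Your proof is correct and self-contained. You split the pieces of the graph into two kinds:
\begin{itemize}
\item Pieces $P_i$ that meet the zero fiber. There Hoffman's lemma, applied to $\{B_iu'+C_iv'\le c_i,\ v'=0\}$ at the point $(u,v)$ with $v$ of minimal norm in $\mathfrak{R}_i(u)$, gives a bound valid for all $u$, with a constant depending only on $(B_i,C_i)$.
\item Pieces that miss the zero fiber. There the attained, strictly positive minimum $m_i$ of $\|v\|$ over $P_i\cap\{\|u\|\le\eta\}$ is combined with the crude estimate $\mathrm{dist}(u,\mathfrak{R}^{-1}(0))\le\eta+\mathrm{dist}(0,\mathfrak{R}^{-1}(0))$.
\end{itemize}
This is the standard mechanism behind bounded error bounds of this type. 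Compared with the citation, it makes visible that the restriction $\|u\|\le\eta$ is needed only for the pieces missing the zero fiber, and that the only external ingredient is Hoffman's lemma. The citation buys the paper brevity; your argument buys transparency.

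One correction to your side remark: nonemptiness of $\mathfrak{R}^{-1}(0)$ is a genuine hypothesis, not a matter of convention. With $\mathrm{dist}(\cdot,\emptyset)=+\infty$, the inequality fails whenever $\mathfrak{R}^{-1}(0)=\emptyset$ but $\mathfrak{R}(u)\neq\emptyset$ for some $\|u\|\le\eta$. For example, the constant map $\mathfrak{R}(u)\equiv\{e_1\}$ has polyhedral graph $\mathbb{R}^n\times\{e_1\}$, yet the left side is $+\infty$ while the right side is $\kappa$. So the statement as written should carry the assumption $\mathfrak{R}^{-1}(0)\neq\emptyset$, exactly as you used it.

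This is harmless for the paper. In Proposition~\ref{prop-linear}, the map \eqref{eq:T_map} satisfies $(\bar x,A\bar x,\bar\lambda)\in\mathfrak{R}^{-1}(0)$ by Assumption~\ref{assp:ADMM}(i).
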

The speed of convergence is characterized by using the notions of Q-linear and R-linear rates.
A sequence $\{z^k\}\subseteq\mathbb{R}^n$ converging to $z^*\in\mathbb{R}^n$ is said to converge Q-linearly (quotient-linear) if there exists a constant $\rho\in(0,1)$ such that \[\limsup_{k\to\infty} \frac{\|z^{k+1}-z^*\|}{\|z^k-z^*\|}\leq \rho.\] It converges R-linearly (root-linear) if there exists a sequence $\{\varepsilon_k\}$ converging Q-linearly to zero such that $\|z^k-z^*\|\le \varepsilon_k$ for all sufficiently large $k$, or equivalently, if $\|z^k-z^*\|\le \alpha \rho^k$ for some $\alpha>0$ and $\rho\in(0,1)$. 
\section[Strong Variational Sufficiency for Problem (1)]{Strong Variational Sufficiency for Problem~\eqref{prob:com}}\label{sec:condition}
In this section, we extend the strong variational sufficiency defined in \cite[Theorem~1]{rockafellar2023augmented} to Problem~\eqref{prob:com}, and show that it holds under mild assumptions. To this end, we first introduce the reduced augmented Lagrangian function \mbox{$l_{\beta}(x,\lambda):\mathbb{R}^n\times\mathbb{R}^m\to\overline{\mathbb{R}}$} of Problem~\eqref{prob:com} as follows:
     \begin{equation}\label{eq:reduced-lag}
         \begin{aligned}
        l_{\beta}(x,\lambda):=&\min_{y}L_{\beta}(x,y,\lambda)      \\=&f(x)+M^{1/\beta}_g(Ax+\frac{\lambda}{\beta})
        +\delta_{\mathcal{C}}(x)-\frac{1}{2\beta}\|\lambda\|^2.
    \end{aligned}
     \end{equation}

The definition of strong variational sufficiency for Problem~\eqref{prob:com} is formally stated below.
\begin{definition}
Let $(\bar{x},\bar{\lambda})\in\mathbb{R}^n\times\mathbb{R}^m$ satisfy the first-order optimal condition \eqref{first order condition1}. We say the strong variational sufficiency for Problem~\eqref{prob:com} holds with respect to $(\bar{x},\bar{\lambda})$ if,  for every  sufficiently large $\beta> 0$, there exists a convex neighborhood $\mathcal{X}_{\beta}\times\Lambda_{\beta}$ of $(\bar{x}, \bar{\lambda})$ such that the reduced Lagrangian $l_{\beta}$ is strongly convex in $x$ when $\lambda\in\Lambda_{\beta}$  and concave in $\lambda$ when $x\in\mathcal{X}_{\beta}\cap\mathcal{C}$.    
\end{definition}

\begin{corollary}\label{coro-unique-x}
Assume that the strong variational sufficiency for Problem~\eqref{prob:com} holds with respect to $(\bar{x},\bar{\lambda})$ with $\beta>0$ and $\mathcal{X}_{\beta}\times \Lambda_{\beta}$.  If $(x^*,\lambda^*)$ is a saddle point of $l_{\beta}(x,\lambda)$ relative to $\mathcal{X}_{\beta}\times \Lambda_{\beta}$, then  $x^*=\bar{x}$.
\end{corollary}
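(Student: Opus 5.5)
The idea is to show that $(\bar x,\bar\lambda)$ is itself a saddle point of $l_\beta$ relative to $\mathcal{X}_\beta\times\Lambda_\beta$. Strong convexity in $x$ at the fixed multiplier $\lambda^*$ then forces the $x$-component of every saddle point to coincide. We use the standard fact that for a convex–concave function on a product of convex sets, the set of saddle points is a product set $X^*\times\Lambda^*$, and $l_\beta(x^*,\lambda^*)$ is the common saddle value.

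\textbf{Step 1: $(\bar x,\bar\lambda)$ is a saddle point.} Put $w:=A\bar x+\bar\lambda/\beta$. By \eqref{eq:partial} with $\mu=1/\beta$, the condition $\bar\lambda\in\partial g(A\bar x)$ gives $\nabla M^{1/\beta}_g(w)=\bar\lambda$ and $\mathrm{prox}_{g/\beta}(w)=A\bar x$.

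For the $x$-side, consider $\phi(x):=f(x)+M^{1/\beta}_g(Ax+\bar\lambda/\beta)$. It is differentiable with $\nabla\phi(\bar x)=\nabla f(\bar x)+A^\top\bar\lambda$. The first inclusion in \eqref{first order condition1} says $-\nabla\phi(\bar x)\in N_{\mathcal C}(\bar x)$. Since $l_\beta(\cdot,\bar\lambda)$ is convex on the convex set $\mathcal{X}_\beta\cap\mathcal C$, this stationarity condition is sufficient, so $\bar x$ minimizes $l_\beta(\cdot,\bar\lambda)$ over $\mathcal{X}_\beta$.

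For the $\lambda$-side, differentiate $\lambda\mapsto M^{1/\beta}_g(A\bar x+\lambda/\beta)-\frac{1}{2\beta}\|\lambda\|^2$. The gradient at $\bar\lambda$ is $\frac1\beta\bar\lambda-\frac1\beta\bar\lambda=0$. Concavity of $l_\beta(\bar x,\cdot)$ on $\Lambda_\beta$ then gives that $\bar\lambda$ maximizes it.

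\textbf{Step 2: interchangeability.} Let $(x^*,\lambda^*)$ be any saddle point. The standard chain
\[
l_\beta(\bar x,\bar\lambda)\le l_\beta(\bar x,\lambda^*)\le l_\beta(x^*,\lambda^*)\le l_\beta(x^*,\bar\lambda)\le l_\beta(\bar x,\bar\lambda)
\]
forces equality throughout. In particular $l_\beta(\bar x,\lambda^*)=l_\beta(x^*,\lambda^*)=\min_{x\in\mathcal X_\beta}l_\beta(x,\lambda^*)$. Hence $\bar x$ is also a minimizer of $l_\beta(\cdot,\lambda^*)$ over $\mathcal X_\beta$.

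\textbf{Step 3: uniqueness.} Since $\lambda^*\in\Lambda_\beta$, the function $l_\beta(\cdot,\lambda^*)$ is strongly convex on the convex set $\mathcal X_\beta$ (its effective domain there is $\mathcal X_\beta\cap\mathcal C$). A strongly convex function has at most one minimizer, so $x^*=\bar x$.

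The main obstacle is Step 1. The $x$-minimization is only over $\mathcal X_\beta$, so we need that a first-order condition on a convex set suffices for a minimum. This holds because the restriction of $l_\beta(\cdot,\bar\lambda)$ to $\mathcal{X}_\beta\cap\mathcal C$ is convex and its nonsmooth part is only $\delta_{\mathcal C}$. We also use the interpretation that the saddle point is taken with both $x^*\in\mathcal X_\beta\cap\mathcal C$ and $\bar x\in\mathcal X_\beta$.
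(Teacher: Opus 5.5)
Your overall strategy matches the paper's: show via \eqref{eq:partial} that $(\bar x,\bar\lambda)$ is a saddle point of $l_\beta$ relative to $\mathcal X_\beta\times\Lambda_\beta$, run the four-term cycle through $(\bar x,\bar\lambda)$ and $(x^*,\lambda^*)$, and conclude by strong convexity. Your Step 1 is a correct, more detailed version of the paper's one-line appeal to \eqref{first order condition2}. In Step 3 you use strong convexity of $l_\beta(\cdot,\lambda^*)$, whereas the paper uses $l_\beta(\cdot,\bar\lambda)$. Either works, since both multipliers lie in $\Lambda_\beta$.

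However, the displayed chain in Step 2 is wrong as written: each of its four inequalities is the reverse of what the saddle-point property gives. For example, $l_\beta(\bar x,\bar\lambda)\le l_\beta(\bar x,\lambda^*)$ would require $\bar\lambda$ to \emph{minimize} $l_\beta(\bar x,\cdot)$, but by your own Step 1 it maximizes it. Similarly, $l_\beta(\bar x,\lambda^*)\le l_\beta(x^*,\lambda^*)$ contradicts $x^*$ minimizing $l_\beta(\cdot,\lambda^*)$ over $\mathcal X_\beta$, and the other two fail in the same way. The valid cycle visits the same four points in the opposite order:
\[
l_\beta(\bar x,\bar\lambda)\le l_\beta(x^*,\bar\lambda)\le l_\beta(x^*,\lambda^*)\le l_\beta(\bar x,\lambda^*)\le l_\beta(\bar x,\bar\lambda).
\]
The four steps use, in turn, that $\bar x$ minimizes $l_\beta(\cdot,\bar\lambda)$, that $\lambda^*$ maximizes $l_\beta(x^*,\cdot)$, that $x^*$ minimizes $l_\beta(\cdot,\lambda^*)$, and that $\bar\lambda$ maximizes $l_\beta(\bar x,\cdot)$. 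This is exactly the paper's chain. With it, you get the equality $l_\beta(\bar x,\lambda^*)=l_\beta(x^*,\lambda^*)=\min_{x\in\mathcal X_\beta}l_\beta(x,\lambda^*)$, and your Step 3 then goes through unchanged.

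As a side remark, the interchangeability of saddle points you cite needs no convex--concave structure: it follows from this cycle alone.
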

\begin{proof}
    By the relationship \eqref{eq:partial}, the first-order optimal condition \eqref{first order condition1} can be equivalently reformulated as 
\begin{align}\label{first order condition2}
    0\in\partial_xl_{\beta}(\bar{x},\bar{\lambda}),~~0\in \partial_{\lambda}[-l_{\beta}](\bar{x},\bar{\lambda}).
\end{align}
Hence,  $(\bar{x}, \bar{\lambda})$ is a saddle point of $l_\beta(x,\lambda)$ relative to $\mathcal{X}_{\beta}\times\Lambda_{\beta}$. 
By the definition of saddle point, it holds that
\begin{align*}
l_\beta(\bar{x},\bar{\lambda})\leq l_{\beta}(x^*,\bar{\lambda})\leq l_{\beta}(x^*,\lambda^*) \leq l_\beta(\bar{x},\lambda^*)\leq l_\beta(\bar{x},\bar{\lambda}),
\end{align*}
which implies that $l_{\beta}(\bar{x},\bar{\lambda})=l_{\beta}(x^*,\bar{\lambda})$. Since $l_{\beta}(x,\bar{\lambda})$ is strongly convex in $x$, it follows that $\bar{x}=x^*$.
\end{proof}

To establish the strong variational sufficiency, we shall prove a local strong convexity property of the Moreau envelope \(M_g^\mu\) on the orthogonal complement of an appropriate subspace through Lemmas~\ref{lem:general proj}--\ref{lem:diff-ri} and Proposition~\ref{prop:Hessian}.

\begin{lemma}\label{lem:general proj}
Let  $g:\mathbb{R}^n\to\overline{\mathbb{R}}$ be a polyhedral convex function, and $\mu>0$. If $y\in\mathrm{dom}~g$, $\lambda\in\partial g(y)$, then for all $\rho_0>0$, there exists $\varepsilon>0$, such that if $\|w\|<{\rho_0}$, $0\leq t<\varepsilon$, 
    \begin{align*}
       \nabla M^{\mu}_{g}(y+\mu\lambda+tw)=\lambda+\frac{t}{\mu}\Pi_{T_{\partial {g}(y)}(\lambda)}[w]. 
    \end{align*}
\end{lemma}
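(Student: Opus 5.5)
The idea is to guess the Moreau decomposition of the point $z_t:=y+\mu\lambda+tw$ and then check it with the Fenchel--Young equality. Proposition~\ref{Moreau-decom} then identifies $\mathrm{prox}_{\mu g}(z_t)$ and hence $\nabla M^{\mu}_g(z_t)=(z_t-\mathrm{prox}_{\mu g}(z_t))/\mu$.

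\textbf{Step 1 (decomposition of $w$).} Set $K:=T_{\partial g(y)}(\lambda)$. This is a closed convex (polyhedral) cone, since $\partial g(y)$ is polyhedral. By \eqref{polar}, $K=[T_{\partial g^*(\lambda)}(y)]^o$, and because polyhedral cones are closed, $K^o=T_{\partial g^*(\lambda)}(y)$. Moreau's decomposition for cones gives
\[
w=\eta+\nu,\qquad \eta=\Pi_K[w],\qquad \nu=\Pi_{K^o}[w],\qquad \langle\eta,\nu\rangle=0,
\]
with $\|\eta\|,\|\nu\|\le\|w\|<\rho_0$. By \eqref{eq:Tg}, $\eta\in T_{g^*}(\lambda\mid y)$ and $\nu\in T_g(y\mid\lambda)$, which means
\[
(g^*)'(\lambda;\eta)=\langle y,\eta\rangle,\qquad g'(y;\nu)=\langle\lambda,\nu\rangle .
\]

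\textbf{Step 2 (candidate pair).} Put $u:=y+t\nu$ and $\lambda':=\lambda+\tfrac{t}{\mu}\eta$. Then $u+\mu\lambda'=y+\mu\lambda+t(\nu+\eta)=z_t$.

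Apply \eqref{eq:one-sided derivate} to $g$ at $y$ with radius $\rho_0$, and to the polyhedral function $g^*$ at $\lambda$ with radius $\rho_0/\mu$. This gives an $\varepsilon>0$, uniform over all $\|w\|<\rho_0$, such that for $0<t<\varepsilon$
\[
g(u)=g(y)+t\langle\lambda,\nu\rangle,\qquad g^*(\lambda')=g^*(\lambda)+\tfrac{t}{\mu}\langle y,\eta\rangle .
\]
In particular $u\in\mathrm{dom}\,g$.

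\textbf{Step 3 (Fenchel--Young).} Since $\lambda\in\partial g(y)$, we have $g(y)+g^*(\lambda)=\langle\lambda,y\rangle$. Adding the two identities of Step 2 yields
\[
g(u)+g^*(\lambda')=\langle\lambda,y\rangle+t\langle\lambda,\nu\rangle+\tfrac{t}{\mu}\langle y,\eta\rangle .
\]
Expanding directly,
\[
\langle\lambda',u\rangle=\langle\lambda,y\rangle+t\langle\lambda,\nu\rangle+\tfrac{t}{\mu}\langle\eta,y\rangle+\tfrac{t^2}{\mu}\langle\eta,\nu\rangle .
\]
The last term vanishes by orthogonality, so the two expressions agree. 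Hence Fenchel--Young holds with equality, i.e.\ $\lambda'\in\partial g(u)$.

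\textbf{Step 4 (conclusion).} By the uniqueness in Proposition~\ref{Moreau-decom}, $\mathrm{prox}_{\mu g}(z_t)=u$. Therefore
\[
\nabla M^\mu_g(z_t)=\frac{z_t-u}{\mu}=\lambda'=\lambda+\frac{t}{\mu}\Pi_K[w].
\]
The case $t=0$ is just \eqref{eq:partial}.

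\textbf{Main obstacle.} The delicate points are obtaining $K^o=T_{\partial g^*(\lambda)}(y)$ from \eqref{polar}, which relies on closedness of polyhedral cones, and keeping $\varepsilon$ uniform in $w$. Uniformity is secured by the bounds $\|\eta\|,\|\nu\|\le\|w\|$ together with the uniform form of \eqref{eq:one-sided derivate}.
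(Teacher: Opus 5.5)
Your proof is correct and uses the same construction as the paper. The paper also splits $w$ into $\Pi_{T_{\partial g(y)}(\lambda)}[w]$ and $\tilde w=w-\Pi_{T_{\partial g(y)}(\lambda)}[w]$, takes the pair $\bigl(y+t\tilde w,\ \lambda+\frac{t}{\mu}\Pi_{T_{\partial g(y)}(\lambda)}[w]\bigr)$, and concludes by uniqueness of the Moreau decomposition. The only difference is how $\lambda'\in\partial g(u)$ is checked. The paper gets $\lambda'\in\partial g(y)$ from \eqref{eq:diff-tan}, shows via the projection inequality that $\lambda'$ maximizes $\langle\cdot,\tilde w\rangle$ over $\partial g(y)$, and then applies the local support-function expansion of $g$ and a subgradient inequality. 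You encode the same two facts as exact first-order expansions of $g$ along $\nu$ and of $g^*$ along $\eta$, and close with Fenchel--Young and $\langle\eta,\nu\rangle=0$.
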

\begin{proof}
Suppose that 
\[ {g}(y)=\max_{1\leq i\leq p}\{a_i^{\top}y+b_i\}+\delta_{\mathcal{P}}(y),\]
where $\mathcal{P}:=\{y\in\mathbb{R}^n\mid By\leq c\}$ with some $B \in \mathbb{R}^{q\times n}$, $c\in\mathbb{R}^q$.
Hence, for $y\in \mathcal{P}$,  
\begin{align}\label{exp-partial g}
    \partial {g}(y)=\mathrm{conv}\{a_j\mid j\in J(y)\}+\partial \delta_{\mathcal{P}}(y), 
\end{align}
where $J(y)=\{ i\mid a_i^{\top}y+b_i={g}(y)\}$. Moreover, for given ${\rho_0}>0$,  there exists $\varepsilon_{\rho_0}>0$, such that $y+tw\in \mathcal{P}$ and 
\[{g}(y+tw)>a_i^{\top}(y+tw)+b_i,~i\notin J(y) \] 
when $w\in T_{\mathcal{P}}(y)$, $\|w\|<{\rho_0}$, $t<\varepsilon_{\rho_0}$. Note that $N_{\mathcal{P}}(y)=\partial \delta_{\mathcal{P}}(y)$, then  
\[ \delta_{\mathcal{P}}(y+tw)=\sup_{v\in N_{\mathcal{P}}(y)} v^{\top}(tw)=\sup_{v\in \partial \delta_{\mathcal{P}}(y)} v^{\top}(tw). \]
This and \eqref{exp-partial g} yield that for any $w\in\mathbb{R}^n$, $\|w\|<{\rho_0}$, $t<\varepsilon_{\rho_0}$, 
      \begin{equation}\label{prop:g-ineq}
          \begin{aligned}
          {g}(y+tw)&=\max_{j\in J(y)} a_j^{\top}(y+tw)+b_j+\delta_{\mathcal{P}}(y+tw)\\   
          &=[\max_{j\in J(y)} ~ta_j^{\top}w+\delta_{\mathcal{P}}(y+tw)]+{g}(y)\\
          &=[\max_{\widetilde{\lambda}\in\partial {g}(y)} ~t\widetilde{\lambda}^{\top}w]+{g}(y).
      \end{aligned}
      \end{equation}
On the other hand, for $\lambda\in\partial {g}(y)$, from \eqref{eq:diff-tan},  there exists $\hat{\varepsilon}_{\rho_0}>0$, such that if $\|w\|<{\rho_0}$ and $0\leq \hat{t}<\hat{\varepsilon}_{\rho_0}$, 
      \begin{align}\label{belong-1}
           \lambda+\hat{t}\Pi_{T_{\partial {g}(y)}(\lambda)}[w]\in\partial {g}(y).
      \end{align}
Let $w$ be such that $\|w\|<{\rho_0}$, and let $\tilde{w}:=w-\Pi_{T_{\partial {g}(y)}(\lambda)}[w]$. Then, $\|\tilde{w}\|\leq \|w\|$ and for any $\bar{\lambda}\in \partial {g}(y)$, 
\begin{align*}
    \tilde{w}^{\top}(\bar{\lambda}-\lambda-\hat{t}\Pi_{T_{\partial {g}(y)}(\lambda)}[w])\leq 0,
\end{align*}
which comes from \eqref{convex-proj} with $\bar{\lambda}-\lambda\in T_{\partial g(y)}(\lambda)$. 
Combining this, \eqref{prop:g-ineq} and \eqref{belong-1}, 
\begin{align}
          {g}(y+t\tilde{w})=t(\lambda+\hat{t}\Pi_{T_{\partial {g}(y)}(\lambda)}[w])^{\top}\tilde{w}+{g}(y),
      \end{align}
      which (by the convexity of $g$) implies that 
      \begin{align}\label{belong2}
    \lambda+\hat{t}\Pi_{T_{\partial {g}(y)}(\lambda)}[w]\in\partial {g}(y+{t}\tilde{w}).
      \end{align} 
      Let $t:=\hat{t}\mu$ with $\hat{t}\leq \min\{\hat{\varepsilon}_{\rho_0},{\varepsilon}_{\rho_0}/\mu\}$. Since $\tilde{w}:=w-\Pi_{T_{\partial {g}(y)}(\lambda)}[w]$, 
      by \eqref{eq:partial} and \eqref{belong2}, we have  
\[ \nabla M^{\mu}_{g}(y+\mu\lambda+tw)
        =\lambda+\frac{t}{\mu}\Pi_{T_{\partial {g}(y)}(\lambda)}[w].\]
\end{proof}
The previous lemma leads to a characterization of points where 
$M^{\mu}_g$ is twice differentiable.

\begin{lemma}\label{lem:diff-ri}
   Let $g:\mathbb{R}^n\to\overline{\mathbb{R}}$ be a polyhedral convex function. Assume that $y\in\mathrm{dom}~g$, $\lambda\in\partial~g(y)$. Then the following statements hold:
   \begin{enumerate}
       \item [(i)] if $\lambda\in\mathrm{ri}~\partial g(y)$, then for any $\mu>0$, it holds that \begin{align}\label{eq:sec-diff}
        \nabla^2 M^{\mu}_g(y+\mu\lambda)=\frac{1}{\mu}\Pi_{V_{\partial g(y)}},
    \end{align}
    \item[(ii)] if there exists $\mu>0$ such that \eqref{eq:sec-diff} holds, then $\lambda\in\mathrm{ri}~\partial g(y)$.
   \end{enumerate} 
\end{lemma}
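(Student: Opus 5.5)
Both parts will be derived from Lemma~\ref{lem:general proj}, which describes $\nabla M^{\mu}_g$ along rays emanating from $z:=y+\mu\lambda$. Write $K:=T_{\partial g(y)}(\lambda)$ and $V:=V_{\partial g(y)}$. The elementary fact underlying everything is that $K=V$ exactly when $\lambda\in\mathrm{ri}\,\partial g(y)$. Indeed, $\partial g(y)$ is a polyhedral convex set, so $K$ is the cone of feasible directions $\mathrm{cone}(\partial g(y)-\lambda)$. This cone is always contained in $V$, and it equals the whole subspace $V$ if and only if $\lambda$ lies in the relative interior.

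\emph{Part (i).} Assume $\lambda\in\mathrm{ri}\,\partial g(y)$. Then $K=V$ and $\Pi_K=\Pi_V$ is linear. Fix $\rho_0=2$ and apply Lemma~\ref{lem:general proj}: there is $\varepsilon>0$ such that for every $\|w\|<2$ and $0\le t<\varepsilon$,
\[
\nabla M^{\mu}_g(z+tw)=\lambda+\frac{t}{\mu}\Pi_V w .
\]
Any $h$ with $\|h\|<\varepsilon$ can be written as $h=tw$ with $\|w\|=1$ and $t=\|h\|$. Hence $\nabla M^{\mu}_g(z+h)=\nabla M^{\mu}_g(z)+\frac1\mu\Pi_V h$ on a whole ball around $z$. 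So $\nabla M^{\mu}_g$ is affine near $z$, and $M^{\mu}_g$ is twice differentiable at $z$ with Hessian $\frac1\mu\Pi_V$. The one point to check is that the $\varepsilon$ in Lemma~\ref{lem:general proj} is uniform over all $w$ in the ball, which is exactly what the lemma states.

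\emph{Part (ii).} Suppose that for some $\mu>0$ the Hessian at $z$ exists and equals $\frac1\mu\Pi_V$. Take any $w\in V$. By Lemma~\ref{lem:general proj}, $\nabla M^{\mu}_g(z+tw)=\lambda+\frac t\mu\Pi_K w$ for small $t>0$. Differentiability of $\nabla M^{\mu}_g$ at $z$ gives the directional derivative
\[
\lim_{t\downarrow0}\frac{\nabla M^{\mu}_g(z+tw)-\lambda}{t}=\frac1\mu\Pi_V w=\frac1\mu w .
\]
Comparing the two expressions yields $\Pi_K w=w$, that is, $w\in K$. Thus $V\subseteq K$, and since $K\subseteq V$ always holds, $K=V$. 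By the fact recalled at the start, $\lambda\in\mathrm{ri}\,\partial g(y)$.

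\emph{Main obstacle.} The delicate step is justifying the equivalence ``$K=V$ iff $\lambda\in\mathrm{ri}\,\partial g(y)$'', which is used in both directions. The key input is that for a polyhedral set $P$ the tangent cone $T_P(\lambda)$ is closed and equals the feasible-direction cone $\mathrm{cone}(P-\lambda)$; this is also the content of \eqref{eq:diff-tan}.
\begin{itemize}
\item[($\Rightarrow$)] If $\lambda\in\mathrm{ri}\,P$, then $\lambda\pm t v\in P$ for every $v\in V$ and small $t>0$. So both $v$ and $-v$ are feasible directions, and $V\subseteq K$.
\item[($\Leftarrow$)] If $K=V$, then $K$ is a subspace, so every direction in $V$ and its negative are feasible. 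By \eqref{eq:diff-tan} with a uniform $\varepsilon$ on the unit sphere of $V$, the set $P$ contains a relative ball $\lambda+(\varepsilon\mathbb{B}\cap V)$. Hence $\lambda\in\mathrm{ri}\,P$.
\end{itemize}
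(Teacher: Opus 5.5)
Your proposal is correct and follows essentially the paper's route: both parts are read off from Lemma~\ref{lem:general proj}, using that $\Pi_{T_{\partial g(y)}(\lambda)}=\Pi_{V_{\partial g(y)}}$ exactly when $T_{\partial g(y)}(\lambda)=V_{\partial g(y)}$, i.e.\ when $\lambda\in\mathrm{ri}\,\partial g(y)$. The only differences are that you fill in details the paper leaves implicit. You use the uniform $\varepsilon$ to show that $\nabla M^{\mu}_g$ is affine on a whole ball around $y+\mu\lambda$, where the paper speaks of a locally uniform limit. You also give an explicit proof that $T_{\partial g(y)}(\lambda)=V_{\partial g(y)}$ characterizes the relative interior, which the paper simply asserts.
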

\begin{proof}
We first prove item (i). Since $\lambda\in\mathrm{ri}\,\partial g(y)$, $T_{\partial g(y)}(\lambda)=V_{\partial g(y)}$ is a linear subspace, so the projection $\Pi_{T_{\partial g(y)}(\lambda)}$ is linear. Fix any $\mu>0$. By Lemma~\ref{lem:general proj}, there exists a locally uniform limit
\begin{equation}\label{eq:limit-deriv}
\lim_{t\downarrow 0}\frac{\nabla M^{\mu}_g(y+\mu\lambda+tw)-\nabla M^{\mu}_g(y+\mu\lambda)}{t}
= \frac{1}{\mu}\Pi_{T_{\partial g(y)}(\lambda)}[w].
\end{equation}
Since the limit map is linear and the convergence is locally uniform, $\nabla M^{\mu}_g$ is differentiable at $y+\mu\lambda$ and its derivative equals this linear map, i.e.,
\[
\nabla^2 M^{\mu}_g(y+\mu\lambda)=\frac{1}{\mu}\Pi_{V_{\partial g(y)}}.
\]
This establishes item (i).

Next, we prove item (ii).  Since \eqref{eq:sec-diff} holds for some $\mu>0$, $\nabla M^{\mu}_g$ is differentiable, which implies that the directional limit in \eqref{eq:limit-deriv} must coincide with the derivative, yielding
\[
\frac{1}{\mu}\Pi_{T_{\partial g(y)}(\lambda)}[w] = \frac{1}{\mu}\Pi_{V_{\partial g(y)}}[w] \quad \forall w.
\]
Thus $\Pi_{T_{\partial g(y)}(\lambda)} = \Pi_{V_{\partial g(y)}}$, which forces \[T_{\partial g(y)}(\lambda) = V_{\partial g(y)},\] and therefore \mbox{$\lambda\in\mathrm{ri}\,\partial g(y)$}. This completes the proof.
\end{proof}

Using Lemma \ref{lem:diff-ri} and the Moreau decomposition (Proposition  \ref{Moreau-decom}), i.e., for any $w\in\mathbb{R}^n$,  \mbox{$w=y+\mu\lambda$} with $y=\mathrm{prox}_{\mu g}(w)$ and $\lambda\in\partial g(y)$, we now derive a lower bound of  $\nabla^2 M^{\mu}_g(w)$ if it exists.

\begin{proposition} \label{prop:Hessian}
 Suppose that $g:\mathbb{R}^m\to\overline{\mathbb{R}}$ is a polyhedral convex function and $\mu>0$. Let $\bar{w}\in\mathbb{R}^m$ be given, admitting the decomposition $\bar{w}={\bar{y}}+ \mu{\bar{\lambda}}$ where ${\bar{y}}=\mathrm{prox}_{\mu g}(\bar{w})$ and ${\bar{\lambda}}\in\partial g({\bar{y}})$. Then, there exists a convex neighborhood $U$ of $\bar{w}$ such that for any $w\in U$, if $\nabla M^{\mu}_g$ is differentiable at $w$, then the following inequality holds:
\begin{align}
     \nabla^2 M^{\mu}_g(w)\succeq \frac{1}{\mu}(I-\Pi_{\mathcal{S}(\bar{y}\mid \bar{\lambda})}),
 \end{align}
where \[\mathcal{S}(\bar{y}\mid \bar{\lambda})=T_g(\bar{y}\mid \bar{\lambda})-T_g(\bar{y}\mid \bar{\lambda})\]
 is the smallest subspace containing the tangent cone $T_g(\bar{y}\mid \bar{\lambda})$.

\end{proposition}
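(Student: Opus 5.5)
We prove the bound in four steps: decompose nearby points, use Lemma~\ref{lem:diff-ri} to get the Hessian as a projection, compare subspaces, and then compare projections. For $w$ near $\bar w$, Proposition~\ref{Moreau-decom} writes $w=y+\mu\lambda$ with $y=\mathrm{prox}_{\mu g}(w)$ and $\mu\lambda=\mathrm{prox}_{(\mu g)^*}(w)$. Both proximal maps are nonexpansive, so $(y,\lambda)\to(\bar y,\bar\lambda)$ as $w\to\bar w$. The convex neighborhood $U$ will be a small ball around $\bar w$, chosen so that $(y,\lambda)$ stays in a neighborhood of $(\bar y,\bar\lambda)$ on which the polyhedral structure is locally conical. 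Concretely, $\mathrm{gph}\,\partial g$ is a finite union of polyhedra, so near $(\bar y,\bar\lambda)$ it coincides with $(\bar y,\bar\lambda)+\mathrm{gph}\,D$, where $D$ is the graphical derivative. By \eqref{eq:Tg}, this derivative is the normal cone map of the cone $K:=T_g(\bar y\mid\bar\lambda)=T_{\partial g^*(\bar\lambda)}(\bar y)$, whose polar is $T_{\partial g(\bar y)}(\bar\lambda)$ by \eqref{polar}.

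Next, take $w\in U$ where $\nabla M^\mu_g$ is differentiable. Lemma~\ref{lem:diff-ri}(ii) gives $\lambda\in\mathrm{ri}\,\partial g(y)$, and then part (i) gives $\nabla^2M^\mu_g(w)=\frac1\mu\Pi_{V_{\partial g(y)}}$. Since $\Pi_{V}\succeq I-\Pi_{\mathcal S}$ exactly when $V^\perp\subseteq\mathcal S$, the claim reduces to the inclusion
\[
V_{\partial g(y)}^{\perp}\subseteq \mathcal S(\bar y\mid\bar\lambda).
\]
Equivalently, we need $\mathcal S(\bar y\mid\bar\lambda)^\perp\subseteq V_{\partial g(y)}$.

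To show this, I would prove $\mathcal S^\perp\subseteq\mathrm{lin}\,T_{\partial g(y)}(\lambda)=V_{\partial g(y)}$, the equality holding because $\lambda$ is in the relative interior. Note that $\mathcal S^\perp=K^\perp=\mathrm{lin}(K^o)$, and $K^o=T_{\partial g(\bar y)}(\bar\lambda)$. By the local conical description, for $(y,\lambda)$ close to $(\bar y,\bar\lambda)$ we have $\partial g(y)=\bar\lambda+N_K(y-\bar y)$ near $\lambda$. Hence $T_{\partial g(y)}(\lambda)=T_{N_K(d)}(\lambda-\bar\lambda)$ with $d=y-\bar y\in K$. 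For a polyhedral cone, $N_K(d)\subseteq K^o$ is a face of $K^o$, and every face of $K^o$ contains its lineality space $\mathrm{lin}(K^o)=K^\perp$. Therefore $T_{N_K(d)}(\cdot)\supseteq K^\perp$ in both directions, since this is a subspace contained in the face. This gives $\mathcal S^\perp\subseteq V_{\partial g(y)}$.

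The main obstacle is making the "locally conical graph" reduction rigorous with elementary tools only. My first attempt would avoid the general graphical-derivative machinery. Using the explicit representation \eqref{exp-partial g} from the proof of Lemma~\ref{lem:general proj}, for $y$ near $\bar y$ one has $J(y)\subseteq J(\bar y)$ and $I(y)\subseteq I(\bar y)$. Then $\partial g(y)$ is a face-like subset of $\partial g(\bar y)$: it is the set of maximizers of $\langle\cdot,y-\bar y\rangle$ over $\partial g(\bar y)$, intersected with a translate. Given $v\in K^\perp$, the function $\langle\cdot,v\rangle$ is constant along the relevant generators $a_j$, $B_i$ modulo a direction. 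One then checks directly that $\lambda\pm tv\in\partial g(y)$ for small $t$, which yields $v\in V_{\partial g(y)}$. The delicate point is verifying that perturbations in $K^\perp$ preserve the active-set structure at $y$ and not only at $\bar y$; this is where the smallness of $U$ is used.
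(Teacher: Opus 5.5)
Your reduction is the same as the paper's. You decompose $w=y+\mu\lambda$, use Lemma~\ref{lem:diff-ri} to get $\nabla^2M^\mu_g(w)=\frac1\mu\Pi_{V_{\partial g(y)}}$, and reduce everything to the inclusion $V_{\partial g(y)}^\perp\subseteq\mathcal S(\bar y\mid\bar\lambda)$. The two routes split on how this inclusion is proved.

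The paper argues on the dual side. Polyhedrality of $g^*$ gives $\partial g^*(\lambda)\subseteq\partial g^*(\bar\lambda)$ for $\lambda$ near $\bar\lambda$. Hence $y\in\partial g^*(\bar\lambda)$ and $T_g(y\mid\lambda)=T_{\partial g^*(\lambda)}(y)\subseteq V_{\partial g^*(\bar\lambda)}=\mathcal S(\bar y\mid\bar\lambda)$. At a relative-interior point, \eqref{polar} gives $T_g(y\mid\lambda)=V_{\partial g(y)}^\perp$.

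You argue on the primal side, through the local conical structure $\mathrm{gph}\,\partial g\approx(\bar y,\bar\lambda)+\mathrm{gph}\,N_K$. You then use the fact that the face $N_K(d)=K^o\cap d^\perp$ of $K^o$ is invariant under translation by $\mathrm{lin}(K^o)=K^\perp$. That face/lineality step is correct, and your route yields more information (the exact local shape of the graph), but it costs a heavier lemma.

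That lemma is where your write-up is not yet a proof. \eqref{eq:Tg} only describes the slices $(w,0)$ and $(0,\eta)$ of $T_{\mathrm{gph}\,\partial g}(\bar y,\bar\lambda)$. It says nothing about mixed pairs $(d,\eta)$, so ``by \eqref{eq:Tg}'' does not show that the graphical derivative equals $N_K$. The identity is true and follows from \eqref{prop:g-ineq}:
\begin{itemize}
\item for small $d$ one has $\partial g(\bar y+d)=\operatorname{argmax}_{P}\langle\cdot,d\rangle$, with $P:=\partial g(\bar y)$;
\item hence $\bar\lambda+t\eta\in\partial g(\bar y+td)$ for all small $t>0$ iff $d\in N_P(\bar\lambda)=K$ and $\eta\in T_P(\bar\lambda)\cap d^\perp=N_K(d)$.
\end{itemize}

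You do not actually need the full identity. Your argument only uses $\bar\lambda\in\partial g(y)$. This is precisely the ``delicate point'' of your fallback, and it is exactly what the paper's inclusion $y\in\partial g^*(\bar\lambda)$ supplies.

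On the primal side it follows directly. $\partial g(y)$ is an exposed face of the polyhedron $P$. The finitely many faces of $P$ not containing $\bar\lambda$ lie at positive distance from $\bar\lambda$, so they cannot contain $\lambda$ once $U$ is small.

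With $\bar\lambda\in\partial g(y)$ in hand, your fallback closes without any active-set bookkeeping. Take $v\in K^\perp=\mathrm{lin}\,T_P(\bar\lambda)$; then $\bar\lambda\pm tv\in P$ for small $t$. Their midpoint $\bar\lambda$ lies in the face $\partial g(y)$, so both points lie in $\partial g(y)$, i.e.\ $v\in V_{\partial g(y)}$.
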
 
\begin{proof}
    Since $g^*$ is also a polyhedral convex function \cite[Theorem~19.2]{rockafellar1997convex}, by \eqref{eq:one-sided derivate}, there exists a convex neighborhood $\Lambda$ of $\bar{\lambda}$ such that 
\begin{align}\label{prop:include1}
    \partial g^*(\lambda)\subseteq \partial g^*(\bar{\lambda}),~\forall~ \lambda\in\Lambda.
\end{align}  
In addition, we have
\begin{align*}
        T_{\partial g^*(\lambda)}(y)&\subseteq T_{\partial g^*(\bar{\lambda})}(y)\subseteq 
        T_{\partial g^*(\bar{\lambda})}(\bar{y})-T_{\partial g^*(\bar{\lambda})}(\bar{y})=V_{\partial g^*(\bar{\lambda})},
    \end{align*}
which by \eqref{eq:Tg} can be equivalently reformulated as
\begin{align}\label{eq:include}
        T_g(y\mid \lambda)\subseteq T_g(y\mid\bar{\lambda})\subseteq \mathcal{S}(\bar{y}\mid \bar{\lambda})=V_{\partial g^*(\bar{\lambda})}.
    \end{align}
    
Note that the mapping
\[
	\lambda(w):=
	\frac{1}{\mu}\operatorname{prox}_{(\mu g)^*}(w)
\]
is continuous, and
\(\lambda(\bar w)=\bar\lambda\in\Lambda\).
Hence, there exists a convex neighborhood
\(\mathcal U\) of \(\bar w\) such that
\(\lambda(w)\in\Lambda\) for every \(w\in\mathcal U\).    
 By Proposition  \ref{Moreau-decom}, this together with \eqref{eq:partial} and \eqref{prop:include1} yields that \[y(w)\in \partial g^*(\lambda(w))\subseteq \partial g^*(\bar{\lambda}),\] where $y(w)=\mathrm{prox}_{\mu g}(w)=w-\mu\lambda(w)$.
Invoking Lemma \ref{lem:diff-ri},  if $\nabla M^{\mu}_g$ is differentiable at $w$, then
\[T_{\partial g(y(w))}(\lambda(w))=V_{\partial g(y(w))},\]
and $\nabla^2 M^{\mu}_g(w)=\frac{1}{\mu}\Pi_{V_{\partial g(y(w))}}$.
Therefore, combining this with \eqref{polar} and \eqref{eq:Tg}, we have   
\[\nabla^2 M^{\mu}_g(w)=\frac{1}{\mu}(I-\Pi_{T_g(y(w)\mid \lambda(w))}).\]
 Incorporating this with the inclusion relation \eqref{eq:include} completes the proof.
\end{proof}

To illustrate the local strong convexity property in
Proposition~\ref{prop:Hessian}, consider the absolute value function
\(g(t)=\vert t\vert\) and let \(\mu>0\). Its proximal mapping is
\[
	\operatorname{prox}_{\mu g}(w)
	=
	\operatorname{sign}(w)\max\{\vert w\vert-\mu,0\},
\]
while its Moreau envelope is given by
\[
	M_g^\mu(w)
	=
	\begin{cases}
		\dfrac{1}{2\mu}w^2, & \vert w\vert\leq\mu,\\[1mm]
		\vert w\vert-\dfrac{\mu}{2}, & \vert w\vert>\mu.
	\end{cases}
\]
Hence, \(M_g^\mu\) is twice differentiable on
\((-\infty,-\mu)\), \((-\mu,\mu)\), and \((\mu,\infty)\), with
\[
	\nabla^2 M_g^\mu(w)
	=
	\begin{cases}
		\dfrac{1}{\mu}, & \vert w\vert<\mu,\\[1mm]
		0, & \vert w\vert>\mu.
	\end{cases}
\]

For a given \(\bar w\in\mathbb R\), two cases arise. First, if
\(\vert \bar w \vert<\mu\), then
\[
	\bar y=0,\qquad
	\bar\lambda=\frac{\bar w}{\mu},\qquad
	T_g(\bar y\mid\bar\lambda)
	=
	S(\bar y\mid\bar\lambda)
	=
	\{0\}.
\]
Consequently,
\(S(\bar y\mid\bar\lambda)^\perp=\mathbb R\), and
Proposition~\ref{prop:Hessian} implies that
\(\nabla^2M_g^\mu(w)\geq 1/\mu\) in a neighborhood of
\(\bar w\). Thus, \(M_g^\mu\) is locally
\(1/\mu\)-strongly convex near \(\bar w\), consistently with the
explicit Hessian formula above.

Second, if \(\vert \bar w \vert\geq\mu\), then
\[
	\bar y
	=
	\bar w-\operatorname{sign}(\bar w)\mu,
	\qquad
	\bar\lambda
	=
	\operatorname{sign}(\bar w),
\]
and
\[
	S(\bar y\mid\bar\lambda)=\mathbb R.
\]
Indeed, \(T_g(\bar y\mid\bar\lambda)\) is a half-axis when
\(\vert \bar w \vert=\mu\) and equals \(\mathbb R\) when
\(\vert \bar w \vert>\mu\). Therefore,
\(S(\bar y\mid\bar\lambda)^\perp=\{0\}\), so that the local
strong convexity property in Proposition~\ref{prop:Hessian} is degenerate in this
case and yields only
\[
	\nabla^2M_g^\mu(w)\geq0
\]
wherever the Hessian exists in a neighborhood of \(\bar w\).
This is again consistent with the explicit formula: the Moreau
envelope is affine when \(\vert w\vert>\mu\), while at
\(\vert w\vert=\mu\) its Hessian does not exist.

\begin{assumption}\label{assp:ADMM}
~
\begin{enumerate}
    \item[(i)] The pair $(\bar{x},\bar{\lambda})$ satisfies the first-order optimal condition \eqref{first order condition1}.
    \item[(ii)] There exists a convex neighborhood $U_{\bar{x}}$ of $\bar{x}$ on which ${\nabla}^2 f({x})$ exists and is continuous. Moreover, $\nabla^2 f(\bar{x})$ is positive definite relative to the subspace $V_{\mathcal{C}}\cap \mathcal{S}_A(\bar{x}\mid \bar{\lambda})$ where \[\mathcal{S}_A(\bar{x}\mid \bar{\lambda})=\{v\mid Av\in \mathcal{S}(A\bar{x}\mid\bar{\lambda})\}.\]
\end{enumerate}
\end{assumption}
Next, by leveraging Proposition \ref{prop:Hessian}, we derive that the strong variational sufficiency for Problem~\eqref{prob:com} holds with respect to $(\bar{x},\bar{\lambda})$ satisfying Assumption~\ref{assp:ADMM}. For the remainder of this paper, set
\[
	P:=I-\Pi_{\mathcal S(A\bar x\mid\bar\lambda)}.
\]
\begin{proposition}\label{prop:local duality}
     Assume that $(\bar{x}, \bar{\lambda})$ satisfies Assumption~\ref{assp:ADMM}. Then, the strong variational sufficiency for Problem~\eqref{prob:com} holds with respect to $(\bar{x},\bar{\lambda})$.
\end{proposition}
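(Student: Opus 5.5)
Concavity in $\lambda$ is the easy half, and the strong convexity in $x$ comes from a Hessian lower bound combined with a Finsler-type lemma.

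\textbf{Concavity in $\lambda$.} By Moreau's identity, for fixed $x$,
\[
M_g^{1/\beta}(Ax+\lambda/\beta)-\tfrac{1}{2\beta}\|\lambda\|^2=\langle Ax,\lambda\rangle+\tfrac{\beta}{2}\|Ax\|^2-\big[\text{Moreau envelope of } g^*\big](\lambda+\beta Ax)
\]
up to the scaling constants. Equivalently, $l_\beta(x,\lambda)=\min_y\{\cdots\}$ is an infimum of functions affine in $\lambda$. Hence $l_\beta(x,\cdot)$ is concave for every $x\in\mathcal{C}$, for any $\beta$ and any neighborhood.

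\textbf{Strong convexity in $x$.} Fix $\lambda$ and set $\mu=1/\beta$. Let $h_\lambda(x)=f(x)+M^{\mu}_g(Ax+\mu\lambda)$. Its gradient $\nabla f(x)+A^\top\nabla M_g^\mu(Ax+\mu\lambda)$ is locally Lipschitz on $U_{\bar x}$, so $\nabla^2 h_\lambda$ exists almost everywhere. Since $h_\lambda+\delta_{\mathcal{C}}$ only needs to be strongly convex along $\mathcal{C}$, it suffices to prove $h_\lambda$ strongly convex on $(\bar x+V_{\mathcal{C}})\cap\mathcal{X}_\beta$. For this I will show there is $\sigma>0$ with
\[
v^\top\nabla^2 h_\lambda(x)v\ge\sigma\|v\|^2\quad\text{for all } v\in V_{\mathcal{C}}
\]
wherever the Hessian exists. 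By the a.e. characterization recalled in Section~\ref{sec:Pre}, applied to $h_\lambda-\frac\sigma2\|\cdot\|^2$ restricted to the affine hull, this gives strong convexity.

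The Hessian bound comes from Proposition~\ref{prop:Hessian}, applied at $\bar w=A\bar x+\mu\bar\lambda$. Here $\mathrm{prox}_{\mu g}(\bar w)=A\bar x$ by \eqref{eq:partial}, since $\bar\lambda\in\partial g(A\bar x)$. The proposition yields a neighborhood $U$ of $\bar w$ on which $\nabla^2M_g^\mu\succeq\beta P$. Choose $\mathcal{X}_\beta\times\Lambda_\beta$ convex and small enough that $Ax+\mu\lambda\in U$ and $x\in U_{\bar x}$. Then
\[
v^\top\nabla^2h_\lambda(x)v\ge v^\top\nabla^2 f(x)v+\beta\|PAv\|^2 .
\]

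\textbf{Main step: a Finsler/Debreu-type lemma.} The key fact is that $\nabla^2 f(\bar x)$ is positive definite on $V_{\mathcal{C}}\cap\ker(PA)$. Indeed, $PAv=0$ iff $Av\in\mathcal{S}(A\bar x\mid\bar\lambda)$ iff $v\in\mathcal{S}_A(\bar x\mid\bar\lambda)$. I will prove that there are $\beta_0$ and $\sigma>0$ with
\[
v^\top\nabla^2 f(\bar x)v+\beta_0\|PAv\|^2\ge 2\sigma\|v\|^2\quad\text{for all } v\in V_{\mathcal{C}}.
\]
The argument is by contradiction with a normalized sequence $v_k\in V_{\mathcal{C}}$, $\|v_k\|=1$, and $\beta_k\to\infty$. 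Compactness gives a limit $v\in V_{\mathcal{C}}\cap\ker PA$ with $v^\top\nabla^2f(\bar x)v\le0$, which contradicts Assumption~\ref{assp:ADMM}(ii).

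Then for $\beta\ge\beta_0$, continuity of $\nabla^2 f$ lets me shrink $\mathcal{X}_\beta$ (independently of $\beta$ in fact) so that $\|\nabla^2f(x)-\nabla^2 f(\bar x)\|\le\sigma$. This gives the uniform bound $\sigma\|v\|^2$ on $V_{\mathcal{C}}$, hence strong convexity of $l_\beta(\cdot,\lambda)$ for every $\lambda\in\Lambda_\beta$.

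\textbf{Obstacle.} I expect the delicate point to be the a.e.-Hessian argument restricted to the affine hull of $\mathcal{C}$ (working on $\bar x+V_{\mathcal{C}}$ intersected with the convex neighborhood). The two requirements to check there are that the Hessian exists a.e. within that lower-dimensional slice, and that the lower bound transfers. Parametrizing $x=\bar x+Qz$, with $Q$ an orthonormal basis of $V_{\mathcal{C}}$, reduces this to a full-dimensional statement in $z$, where Rademacher applies to $z\mapsto Q^\top\nabla h_\lambda(\bar x+Qz)$.
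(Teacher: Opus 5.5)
The concavity argument and the compactness proof of the Finsler-type bound on $V_{\mathcal C}$ are correct; the latter is a direct proof of the lemma the paper cites. The gap is in how you combine these with Proposition~\ref{prop:Hessian}. You claim $v^\top\nabla^2h_\lambda(x)v\ge v^\top\nabla^2f(x)v+\beta\|PAv\|^2$ wherever the Hessian of $h_\lambda$ exists, but this does not follow. Proposition~\ref{prop:Hessian} bounds $\nabla^2M_g^\mu(w)$ only at points $w$ where $\nabla M_g^\mu$ is itself differentiable. Knowing that $h_\lambda$, or $z\mapsto h_\lambda(\bar x+Qz)$, has a Hessian at a point only tells you that $x\mapsto A^\top\nabla M_g^\mu(Ax+\mu\lambda)$ is differentiable there along $V_{\mathcal C}$. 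It says nothing about differentiability of $\nabla M_g^\mu$ at $Ax+\mu\lambda$. The reason is that the affine set $\{Ax+\mu\lambda : x\in\bar x+V_{\mathcal C}\}$ can have empty interior in $\mathbb{R}^m$ and lie entirely in the kink set of $\nabla M_g^\mu$. For instance, take $g(y_1,y_2)=|y_1|$, $Ax=(0,x)$ and $\lambda=\bar\lambda=(1,0)$. The image is the line $\{(\mu,x): x\in\mathbb{R}\}$, on which $\nabla M_g^\mu$ is nowhere differentiable, yet $h_\lambda=f+\mu/2$ is smooth. So the chain rule $\nabla^2h_\lambda=\nabla^2 f+A^\top\nabla^2M_g^\mu A$ that your inequality implicitly uses is unavailable. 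Your $Q$-parametrization secures a.e.\ existence of the Hessian in $z$, but not the transfer of the lower bound. That transfer is exactly the delicate point you flagged, and the proposal does not resolve it.

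The fix, which is what the paper does, is to run the a.e.-Hessian argument in $w$-space, where Proposition~\ref{prop:Hessian} applies, before composing with $A$.

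On the open convex neighborhood $U\subseteq\mathbb{R}^m$ of $\bar w$, the function $\widehat\Phi(w):=M_g^\mu(w)-\frac{\beta}{2}w^\top Pw$ has a locally Lipschitz gradient and a positive semidefinite Hessian wherever that Hessian exists. Hence it is convex on $U$.

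Then $x\mapsto M_g^\mu(Ax+\mu\lambda)-\frac{\beta}{2}x^\top A^\top PAx$ equals $\widehat\Phi(Ax+\mu\lambda)$ plus a term affine in $x$, so it is convex.

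The remainder $f(x)+\frac{\beta}{2}x^\top A^\top PAx+\delta_{\mathcal C}(x)$ is $C^2$ plus an indicator. Your Finsler bound together with continuity of $\nabla^2 f$ gives its strong convexity on $\mathcal C$ near $\bar x$ by an ordinary Taylor argument along segments in $\bar x+V_{\mathcal C}$. No nonsmooth slice argument is needed at all.

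A pointwise alternative closer to your plan is to use Lemma~\ref{lem:general proj}. At every $w=y+\mu\lambda'$ near $\bar w$, the one-sided derivative of $\nabla M_g^\mu$ in direction $d$ is $\frac{1}{\mu}\Pi_T(d)$, with $T=T_{\partial g(y)}(\lambda')$ and $T^o=T_g(y\mid\lambda')\subseteq\mathcal{S}(A\bar x\mid\bar\lambda)$. Hence $\langle d,\Pi_T(d)\rangle=\|\Pi_T(d)\|^2\ge\|Pd\|^2$ holds at every point, not just a.e., and this can be integrated along segments.
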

\begin{proof}
For any nonzero \(v\in V_{\mathcal C}\) satisfying
\(v^\top(-A^\top PA)v\geq0\), we have \(PAv=0\), and hence
\[  v\in
	V_{\mathcal C}\cap
	\mathcal S_A(\bar x\mid\bar\lambda).
\]
Thus, Assumption~\ref{assp:ADMM}(ii) gives
\(v^\top\nabla^2f(\bar x)v>0\). Applying Finsler's lemma
\cite[Lemma~1.2]{Cimpric2015} to the corresponding quadratic
forms restricted to \(V_{\mathcal C}\), there exists
\(\beta_0>0\) such that
\[
	\nabla^2f(\bar x)+\beta_0A^\top PA
\]
is positive definite relative to \(V_{\mathcal C}\). Hence, by the continuity of $\nabla^2 f$, there exists a convex neighborhood $\mathcal{X}_1$ of $\bar{x}$ on which
\begin{align}\label{def-Psi}
    \Psi_{\beta_0}(x):=f(x)+\frac{\beta_0}{2} x^{\top}A^{\top}PAx+\delta_{\mathcal{C}}(x)
\end{align}
is strongly convex. 

On the other hand, fix \(\beta\geq\beta_0\) and set 
$\bar w_\beta:=A\bar x+\bar\lambda/\beta.$
By Proposition~\ref{prop:Hessian}, there exists a convex neighborhood
\(\mathcal U_\beta\) of \(\bar w_\beta\) such that $\nabla^2M_g^{1/\beta}(w)\succeq\beta P$
whenever \(w\in\mathcal U_\beta\) and
\(\nabla M_g^{1/\beta}\) is differentiable at \(w\).
Shrinking \(\mathcal U_\beta\) if necessary, we may assume that
it is open and convex. Define
\[
	\widehat\Phi_\beta(w)
	:=
	M_g^{1/\beta}(w)-\frac{\beta}{2}w^\top Pw.
\]
Since \(\nabla M_g^{1/\beta}\) is Lipschitz continuous,
\(\nabla\widehat\Phi_\beta\) is locally Lipschitz continuous.
Moreover, wherever its Hessian exists in \(\mathcal U_\beta\),
\[
	\nabla^2\widehat\Phi_\beta(w)
	=
	\nabla^2M_g^{1/\beta}(w)-\beta P
	\succeq0.
\]
It follows from the Hessian characterization of convexity
recalled in Section~\ref{sec:Pre} that \(\widehat\Phi_\beta\) is convex on
\(\mathcal U_\beta\).

Let 
\begin{align}\label{def-Phi}
    \Phi_\beta(x,\lambda)
	:=
	M_g^{1/\beta}
	\left(Ax+\frac{\lambda}{\beta}\right)
	-\frac{\beta}{2}x^\top A^\top PAx.
\end{align}
Let \(\mathcal X\) and
\(\Lambda_\beta\) be  convex neighborhoods of \(\bar x\) and \(\bar\lambda\), respectively, such that $Ax+\lambda/\beta\in\mathcal{U}_{\beta}$ for all $(x,\lambda)\in\mathcal{X}\times \Lambda_{\beta}$. Then,  $\Phi_\beta(x,\lambda)$
is convex in \(x\) on \(\mathcal X\) for every
\(\lambda\in\Lambda_\beta\), since
\[
	\Phi_\beta(x,\lambda)
	=
	\widehat\Phi_\beta
	\left(Ax+\frac{\lambda}{\beta}\right)
	+\left\langle A^\top P\lambda,x\right\rangle
	+\frac{1}{2\beta}\lambda^\top P\lambda.
\]
It follows that for any $\lambda\in\Lambda_{\beta}$, $l_{\beta}(x,\lambda)$ is strongly convex on $\mathcal{X}_{\beta}:=\mathcal{X}_1\cap\mathcal{X}$,
since 
\[l_{\beta}(x,\lambda)=\Phi_\beta(x,\lambda)+\Psi_{\beta_0}(x)+\frac{\beta-\beta_0}{2}x^\top A^\top PAx-\frac{1}{2\beta}\|\lambda\|^2.\]
Because
\begin{align*}
    M_g^{1/\beta}(Ax+\frac{\lambda}{\beta})-\frac{1}{2\beta}\|\lambda\|^2&=\inf_{y} g(y)+\langle \lambda, Ax-y\rangle+\frac{\beta}{2}\|Ax-y\|^2,
\end{align*}
   $l_{\beta}(x,\lambda)$ is concave on $\Lambda_{\beta}$ for any $x\in\mathcal{X}_{\beta}\cap\mathcal{C}$. This completes the proof.
\end{proof}

\section{Local Convergence Analysis}\label{sec:local}
In this section, we establish the local convergence of ADMM scheme \eqref{alg:classical ADMM1} for Problem \eqref{prob:linear-eq} under Assumption~\ref{assp:ADMM}. Our analysis proceeds in two main steps. First, we derive a key inequality whose structure is similar to those obtained in the analysis of classical ADMM for convex problems. Subsequently, building upon this fundamental inequality and local duality established in Proposition \ref{prop:local duality}, we prove the local convergence of the sequence generated by ADMM scheme \eqref{alg:classical ADMM1}.

\begin{proposition}\label{prop:fund}
    Let $(\bar{x}, \bar{\lambda})$ satisfy Assumption~\ref{assp:ADMM}. Then, there exists $\beta_0>0$ such that for any $\beta\geq \beta_0$, there exists a convex neighborhood $\mathcal{X}_{\beta}\times\Lambda_{\beta}$ of $(\bar{x}, \bar{\lambda})$ with the following property: 
    if $y^k\in\mathrm{dom}~g$, $\lambda^k\in\Lambda_{\beta}\cap\partial g(y^k)$ and $x^{k+1}$ generated by \eqref{alg:cl1-ADMM1} lies in $\mathcal{X}_{\beta}$,
      then the subsequent iterate $(x^{k+1},y^{k+1},\lambda^{k+1})$ generated by  \eqref{alg:classical ADMM1} satisfies
    \begin{equation}
        \begin{aligned}\label{ineq:fund}
    &l_{\beta_0}(x^{k+1},\lambda^*)-l_{\beta_0}(\bar{x},{\lambda}^*)+\frac{\theta\beta-{\beta_0}}{2}(x^{k+1}-\bar{x})^{\top}A^{\top}PA(x^{k+1}-\bar{x})\\
    \leq &\frac{\beta}{2}(\|y^k-\bar{y}\|^2-\|y^{k+1}-\bar{y}\|^2)+\frac{1}{2\beta}(\|\lambda^k-{{\lambda}^*}\|^2-\|\lambda^{k+1}-{{\lambda}^*}\|^2)\\
    &-\frac{\beta}{2}\|y^{k+1}-y^k\|^2-\frac{(1-\theta)\beta-{\beta_0}}{2\beta^2}\|\lambda^{k+1}-\lambda^k\|^2,
\end{aligned}
    \end{equation}
    where $(\bar{x},\lambda^*)$ is any saddle point of $l_{\beta_0}(x,\lambda)$ defined in \eqref{eq:reduced-lag} relative to $\mathcal{X}_{\beta}\times\Lambda_{\beta}$,  $\theta\in (0,1)$ and $\bar{y}=A\bar{x}$.
\end{proposition}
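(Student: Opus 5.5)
The plan is to mimic the classical convex-ADMM descent argument. The convexity of $f$ is replaced by the strong convexity of $\Psi_{\beta_0}$ from \eqref{def-Psi}, and the missing curvature in the $P$-direction is supplied by a structural fact about $\partial g$ near $(\bar y,\bar\lambda)$.

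\textbf{Step 1: choice of neighborhoods.} Take $\beta_0$ as in the proof of Proposition~\ref{prop:local duality}. For $\beta\ge\beta_0$, choose $\mathcal{X}_\beta\times\Lambda_\beta$ inside the neighborhoods of that proof, with two extra properties:
\begin{itemize}
\item $\Psi_{\beta_0}$ is convex on $\mathcal{X}_\beta$;
\item for $x\in\mathcal{X}_\beta$ and $\lambda\in\Lambda_\beta$, the point $w=Ax+\lambda/\beta$ lies in a neighborhood $\mathcal U$ of $\bar w_\beta=A\bar x+\bar\lambda/\beta$. On $\mathcal U$ we require $\lambda(w)=\beta(w-\mathrm{prox}_{g/\beta}(w))\in\Lambda$, where $\Lambda$ is the neighborhood of \eqref{prop:include1}.
\end{itemize}
Then $y^{k+1}=\mathrm{prox}_{g/\beta}(Ax^{k+1}+\lambda^k/\beta)$ and $\lambda^{k+1}=\lambda(Ax^{k+1}+\lambda^k/\beta)$. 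By \eqref{eq:partial}, $\lambda^{k+1}\in\partial g(y^{k+1})$ and $y^{k+1}\in\partial g^*(\lambda^{k+1})\subseteq\partial g^*(\bar\lambda)$. The same applies to the saddle point: the saddle conditions for $l_{\beta_0}$ at $(\bar x,\lambda^*)$, read through \eqref{first order condition2}, give $\lambda^*\in\partial g(\bar y)$ with $\bar y=A\bar x$. After shrinking $\Lambda_\beta\subseteq\Lambda$ we also get $\bar y\in\partial g^*(\bar\lambda)$. Consequently
\[
y^{k+1}-\bar y\in V_{\partial g^*(\bar\lambda)}=\mathcal S(A\bar x\mid\bar\lambda),
\]
by \eqref{eq:include}, so $P(y^{k+1}-\bar y)=0$. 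This is the key structural fact.

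\textbf{Step 2: optimality conditions and the two convexity inequalities.} The $x$-step \eqref{alg:cl1-ADMM1} gives
\[
0\in\nabla f(x^{k+1})+N_{\mathcal C}(x^{k+1})+A^\top\bigl(\lambda^{k+1}+\beta(y^{k+1}-y^k)\bigr).
\]
Adding $\beta_0A^\top PAx^{k+1}$ to both sides produces a subgradient of $\Psi_{\beta_0}$ at $x^{k+1}$, and convexity of $\Psi_{\beta_0}$ on $\mathcal{X}_\beta$ yields a lower bound for $\Psi_{\beta_0}(\bar x)$. Convexity of $g$ together with $\lambda^{k+1}\in\partial g(y^{k+1})$ gives
\[
g(\bar y)\ge g(y^{k+1})+\langle\lambda^{k+1},\bar y-y^{k+1}\rangle.
\]

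To bound the left-hand side of \eqref{ineq:fund}, use two facts:
\begin{itemize}
\item $l_{\beta_0}(\bar x,\lambda^*)=f(\bar x)+g(\bar y)$;
\item taking $y=y^{k+1}$ in the infimum defining $l_{\beta_0}$,
\[
l_{\beta_0}(x^{k+1},\lambda^*)\le f(x^{k+1})+g(y^{k+1})+\langle\lambda^*,Ax^{k+1}-y^{k+1}\rangle+\tfrac{\beta_0}{2}\|Ax^{k+1}-y^{k+1}\|^2 .
\]
\end{itemize}
Add the two convexity inequalities and use $Ax^{k+1}-y^{k+1}=(\lambda^{k+1}-\lambda^k)/\beta$. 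The quadratic terms in $A^\top PA$ from $\Psi_{\beta_0}$ are rewritten around $\bar x$; the leftover $-\tfrac{\beta_0}{2}(x^{k+1}-\bar x)^\top A^\top PA(x^{k+1}-\bar x)$ is what appears on the left of \eqref{ineq:fund}. The remaining cross terms,
\[
\langle\lambda^{k+1}-\lambda^*,\lambda^{k}-\lambda^{k+1}\rangle/\beta
\quad\text{and}\quad
\beta\langle y^{k+1}-y^k,\ A(\bar x-x^{k+1})\rangle,
\]
are converted by the three-point identity $2\langle a-b,c-a\rangle=\|c-b\|^2-\|a-b\|^2-\|c-a\|^2$ into the telescoping terms of \eqref{ineq:fund}. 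The term $\langle y^{k+1}-y^k,\lambda^{k+1}-\lambda^k\rangle\ge0$, from monotonicity of $\partial g$ since $\lambda^k\in\partial g(y^k)$, is dropped. This leaves $-\tfrac{1}{2\beta}\|\lambda^{k+1}-\lambda^k\|^2$ together with $+\tfrac{\beta_0}{2\beta^2}\|\lambda^{k+1}-\lambda^k\|^2$ from the penalty term.

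\textbf{Step 3: producing the $\theta\beta$ term.} By Step 1,
\[
PA(x^{k+1}-\bar x)=P(Ax^{k+1}-y^{k+1})=P(\lambda^{k+1}-\lambda^k)/\beta,
\]
hence
\[
(x^{k+1}-\bar x)^\top A^\top PA(x^{k+1}-\bar x)\le\|\lambda^{k+1}-\lambda^k\|^2/\beta^2 .
\]
Using this, a fraction $\theta$ of the $\tfrac{1}{2\beta}\|\lambda^{k+1}-\lambda^k\|^2$ term is moved to the left as $\tfrac{\theta\beta}{2}(x^{k+1}-\bar x)^\top A^\top PA(x^{k+1}-\bar x)$. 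The rest gives exactly the coefficient $\tfrac{(1-\theta)\beta-\beta_0}{2\beta^2}$.

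\textbf{Main obstacle.} I expect the hardest part to be Step 1: guaranteeing simultaneously that $\lambda^{k+1}$, $\lambda^*$ and $\bar\lambda$ all lie in the neighborhood where \eqref{prop:include1} holds, so that $P(y^{k+1}-\bar y)=0$. This is why the neighborhoods must be chosen through the continuity of the map $\lambda(w)$ rather than directly, and it needs care because $\mathcal U$ depends on $\beta$.
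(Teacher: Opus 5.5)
Your proposal is correct. With $r=Ax^{k+1}-y^{k+1}=(\lambda^{k+1}-\lambda^k)/\beta$, Steps 2--3 reproduce \eqref{ineq:fund} with exactly the stated coefficients. The route differs from the paper's at the key step.

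\textbf{How the paper gets the $\theta\beta$ term.} It never uses $P(y^{k+1}-\bar y)=0$. Instead it uses the convexity of $\Phi_\beta(\cdot,\lambda^k)$ on $\mathcal X_\beta$, which comes from the Hessian lower bound $\nabla^2 M_g^{1/\beta}\succeq\beta P$ of Proposition~\ref{prop:Hessian}. This produces \eqref{ineq:y2}, which gains $\frac{\beta}{2}(x^{k+1}-\bar x)^\top A^\top PA(x^{k+1}-\bar x)$ on the left at the price of $\frac{1}{2\beta}\|\lambda^{k+1}-\lambda^k\|^2$ on the right. The paper then adds $(1-\theta)$ times \eqref{ineq:y1} and $\theta$ times \eqref{ineq:y2}.

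\textbf{How you get it.} You extract the same trade-off at first order. Since $\lambda^{k+1}$ lands in the neighborhood $\Lambda$ of \eqref{prop:include1}, $y^{k+1}$ stays in the face $\partial g^*(\bar\lambda)$. Hence $PA(x^{k+1}-\bar x)=P(\lambda^{k+1}-\lambda^k)/\beta$, and the quadratic form is bounded by $\|\lambda^{k+1}-\lambda^k\|^2/\beta^2$ directly.

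\textbf{What each route buys.} Yours is more elementary. For the descent inequality it dispenses with Lemmas~\ref{lem:general proj}--\ref{lem:diff-ri}, the Hessian bound, and the a.e.-Hessian convexity criterion. It needs only the inclusion \eqref{prop:include1} and the continuity of $w\mapsto\lambda(w)$, i.e.\ the first half of the proof of Proposition~\ref{prop:Hessian}. It also makes transparent that the curvature in the $P$-direction is paid for by the dual residual. The paper's route keeps the estimate inside the variational-sufficiency framework: the same convexity of $\Phi_\beta$ is what proves Proposition~\ref{prop:local duality}. That formulation is the more natural starting point for extensions beyond polyhedral $g$, where iterates need not remain in a fixed face.

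\textbf{A small simplification.} You do not need $\lambda^*\in\Lambda$ to obtain $\bar y\in\partial g^*(\bar\lambda)$. This is simply $\bar\lambda\in\partial g(A\bar x)$ from Assumption~\ref{assp:ADMM}(i). So the only thing Step 1 must control is $\lambda^{k+1}$, and your continuity argument already does that. The obstacle you anticipated is therefore milder than you feared.
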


\begin{proof}
By Proposition \ref{prop:local duality} with \(\beta=\beta_0\) and
	intersecting the resulting convex neighborhood with that constructed
	for the current \(\beta\geq\beta_0\), we may choose a convex
	neighborhood \(\mathcal X_\beta\times\Lambda_\beta\) of
	\((\bar x,\bar\lambda)\) such that
	\(\ell_{\beta_0}(\cdot,\lambda)\) is strongly convex on
	\(\mathcal X_\beta\) for every \(\lambda\in\Lambda_\beta\),
	\(\ell_{\beta_0}(x,\cdot)\) is concave on \(\Lambda_\beta\)
	for every \(x\in\mathcal X_\beta\cap\mathcal C\),
	\(\Psi_{\beta_0}\) defined in~\eqref{def-Psi} is strongly convex on
	\(\mathcal X_\beta\), and
	\(\Phi_\beta(\cdot,\lambda)\) defined in~\eqref{def-Phi} is convex on
	\(\mathcal X_\beta\) for every \(\lambda\in\Lambda_\beta\).
   
   By the optimality condition of \eqref{alg:cl1-ADMM1} and the strong convexity of $\Psi_{\beta_0}+\langle \lambda^k, Ax\rangle$, we have
    \begin{align*}
    &f(x^{k+1})+\langle \lambda^k, Ax^{k+1}\rangle+\frac{\beta_0}{2}{x^{k+1}}^{\top}A^{\top}PAx^{k+1}\\
    \leq &f(\bar{x})+\langle \lambda^k,A\bar{x}\rangle+\frac{\beta_0}{2}\bar{x}^{\top}A^{\top}PA\bar{x}\\
    &+\langle -{\beta_0} A^{\top}PAx^{k+1}+\beta A^{\top}(Ax^{k+1}-y^k),\bar{x}-x^{k+1}\rangle.
\end{align*}
It can be reformulated as
\begin{equation}\label{ineq:x}
    \begin{aligned}
        &f(x^{k+1})+\langle \lambda^k, Ax^{k+1}\rangle-\frac{\beta_0}{2}(x^{k+1}-\bar{x})^{\top}A^{\top}PA(x^{k+1}-\bar{x})\\
    \leq &f(\bar{x})+\langle \lambda^k,A\bar{x}\rangle+\frac{\beta}{2}\|A\bar{x}-y^k\|^2-\frac{\beta}{2}\|Ax^{k+1}-y^k\|^2-\frac{\beta}{2}\|Ax^{k+1}-A\bar{x}\|^2.
\end{aligned}
\end{equation}
 Let $\bar{y}:=A\bar{x}$.  By the optimality condition of \eqref{alg:cl1-ADMM2} and the convexity of $g$, we obtain
 \begin{equation}\label{ineq:y1}
     \begin{aligned}
   & g(y^{k+1})+\langle\lambda^k,-y^{k+1}\rangle+\frac{\beta}{2}\|Ax^{k+1}-y^{k+1}\|^2\\
    \leq& g(\bar{y})+\langle\lambda^k,-\bar{y}\rangle+\frac{\beta}{2}\|Ax^{k+1}-\bar{y}\|^2-\frac{\beta}{2}\|y^{k+1}-\bar{y}\|^2.
    \end{aligned}
 \end{equation}
On the other hand, by \eqref{alg:cl1-ADMM2} and \eqref{alg:cl1-ADMM3}, we have 
\begin{equation}\label{eq:moreau-expan}
    \begin{aligned}
    &g(y^{k+1})+\langle \lambda^k, Ax^{k+1}-y^{k+1}\rangle+\frac{\beta}{2}\|Ax^{k+1}-y^{k+1}\|^2\\
=&M^{1/\beta}_g(Ax^{k+1}+\frac{\lambda^k}{\beta})-\frac{1}{2\beta}\|\lambda^k\|^2,\end{aligned}
\end{equation}
and
\begin{align}\label{eq:moreau-grad}
    \lambda^{k+1}&=\nabla M^{1/\beta}_g(Ax^{k+1}+\frac{\lambda^k}{\beta})\in\partial g(y^{k+1}).
\end{align}
Since $\Phi_{\beta}(x,\lambda^k)$ is convex on $\mathcal{X}_{\beta}$ and $x^{k+1}\in\mathcal{X}_{\beta}$, using \eqref{eq:moreau-grad} yields that
\begin{align}\label{ineq:Moreau-P}
   \Phi_{\beta}(x^{k+1},\lambda^k)-\Phi_{\beta}(\bar{x},\lambda^k)
    \leq  -\langle A^{\top}\lambda^{k+1}-\beta A^{\top}PAx^{k+1}, \bar{x}-x^{k+1}\rangle.
\end{align}
By combining formulas \eqref{eq:moreau-expan} and \eqref{ineq:Moreau-P} with 
\begin{align*}
    &M^{1/\beta}_g(A\bar{x}+\frac{\lambda^k}{\beta})-\frac{1}{2\beta}\|\lambda^k\|^2
\leq g(\bar{y})+\langle \lambda^k, A\bar{x}-\bar{y}\rangle+\frac{\beta}{2}\|A\bar{x}-\bar{y}\|^2,
\end{align*}
we further obtain
\begin{equation}\label{ineq:y2}
    \begin{aligned}
    &g(y^{k+1})+\langle\lambda^k,-y^{k+1}\rangle+\frac{\beta}{2}\|Ax^{k+1}-y^{k+1}\|^2+\frac{\beta}{2}(x^{k+1}-\bar{x})^{\top}A^{\top}PA(x^{k+1}-\bar{x})\\
    \leq &g(\bar{y})+\langle\lambda^k,-\bar{y}\rangle+\frac{\beta}{2}\|Ax^{k+1}-\bar{y}\|^2-\frac{\beta}{2}\|y^{k+1}-\bar{y}\|^2+\frac{1}{2\beta}\|\lambda^{k+1}-\lambda^k\|^2.
\end{aligned}
\end{equation}
As $g$ is convex, $\lambda^k\in\partial g(y^k)$, and $\lambda^{k+1}\in\partial g(y^{k+1})$, we have
\[\|Ax^{k+1}-y^k\|^2\geq \frac{1}{\beta^2}\|\lambda^{k+1}-\lambda^k\|^2+\|y^{k+1}-y^k\|^2.\]
Furthermore,  multiplying both sides of equation
\eqref{ineq:y1} by $(1-\theta)$ and both sides of equation \eqref{ineq:y2} by $\theta$ with $\theta\in(0,1)$,  and adding them with equation \eqref{ineq:x}, we obtain
\begin{equation}\label{ineq:entire1}
    \begin{aligned}   &L_\beta(x^{k+1},y^{k+1},\lambda^k)+\frac{\theta\beta-{\beta_0}}{2}(x^{k+1}-\bar{x})^{\top}A^{\top}PA(x^{k+1}-\bar{x})\\
    \leq &f(\bar{x})+g(\bar{y})+\frac{\beta}{2}(\|y^k-\bar{y}\|^2-\|y^{k+1}-\bar{y}\|^2)\\
    &-\frac{1-\theta}{2\beta}\|\lambda^{k+1}-\lambda^k\|^2-\frac{\beta}{2}\|y^{k+1}-y^k\|^2.
\end{aligned}
\end{equation}
By the equivalence between  \eqref{first order condition1} and \eqref{first order condition2}, the set of saddle points of $l_{\beta_0}$ relative to $\mathcal{X}_{\beta}\times\Lambda_{\beta}$ is nonempty.
Let $(\bar{x},{\lambda}^*)$ be an arbitrary saddle point in this set. Then, we have
\begin{align}\label{eq:saddle}
    l_{\beta_0}(\bar{x},{{\lambda}^*})=f(\bar{x})+g(A\bar{x})=f(\bar{x})+g(\bar{y}).
\end{align}
Furthermore, 
\begin{align*}
    l_{\beta_0}(x^{k+1},{{\lambda}^*})\leq & f(x^{k+1})+g(y^{k+1})+\langle{{\lambda}^*},Ax^{k+1}-y^{k+1}\rangle+\frac{\beta_0}{2}\|Ax^{k+1}-y^{k+1}\|^2\\
    =&L_{\beta}(x^{k+1},y^{k+1},\lambda^k)+\frac{\beta_0}{2\beta^2}\|\lambda^{k+1}-\lambda^k\|^2+\frac{1}{2\beta}(\|\lambda^k-\lambda^*\|^2-\|\lambda^{k+1}-\lambda^*\|^2).
\end{align*}
Combining this with \eqref{ineq:entire1} and \eqref{eq:saddle}, we obtain \eqref{ineq:fund}.
\end{proof}

 Based on Proposition \ref{prop:fund}, we now present the local convergence result for the ADMM scheme \eqref{alg:classical ADMM1} applied to Problem \eqref{prob:linear-eq}.
\begin{proposition}\label{prop:local-convergence}
	Assume that \((\bar x,\bar\lambda)\) satisfies Assumption~\ref{assp:ADMM}.
	Then, there exists \(\beta_0>0\) such that, for every
	\(\beta>2\beta_0\), there exist closed convex neighborhoods
	\(\mathcal X_\beta\) and \(\Lambda_\beta\) of
	\(\bar x\) and \(\bar\lambda\), respectively, and a constant
	\(\rho_\beta>0\) such that
		$\mathbb B(\bar\lambda,\rho_\beta)
		\subseteq \operatorname{int}\Lambda_\beta$
	with the following property. Let
	\(\{(x^k,y^k,\lambda^k):k\in\mathbb N\}\)
	be a well-defined sequence generated by ADMM scheme~\eqref{alg:classical ADMM1}.
	Suppose that
	\begin{equation}
		\lambda^0\in\partial g(y^0),
		\qquad
		\beta^2\|y^0-A\bar x\|^2
		+\|\lambda^0-\bar\lambda\|^2
		\leq \rho_\beta^2,
		\label{the:initial}
	\end{equation}
	and that $x^k\in\mathcal X_\beta$, for all $k\geq1$.
	Then
	\[
		x^k\to\bar x,\qquad
		y^k\to A\bar x,\qquad
		\lambda^k\to\widehat \lambda
	\]
	for some \(\widehat \lambda\in\Lambda_\beta^*\), where
	\[
		\Lambda_\beta^*
		:=
		\left\{
		\lambda\in\Lambda_\beta
		\;\Big\vert \;
		\begin{array}{l}
			0\in\nabla f(\bar x)+A^\top\lambda
			+N_{\mathcal C}(\bar x),\\[-1mm]
			\lambda\in\partial g(A\bar x)
		\end{array}
		\right\}.
	\]
	Moreover, $\widehat \lambda\in\mathbb B(\bar\lambda,\rho_\beta)
		\subseteq\operatorname{int}\Lambda_\beta$.
	
\end{proposition}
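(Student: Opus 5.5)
Our plan is to use Proposition~\ref{prop:fund} as a Fejér-type descent inequality for the weighted quantity
\[
E_k(\lambda^*):=\beta\|y^k-\bar y\|^2+\tfrac1\beta\|\lambda^k-\lambda^*\|^2,\qquad \bar y=A\bar x .
\]
We take $\beta_0$ from Proposition~\ref{prop:fund}. For $\beta>2\beta_0$ we choose $\theta=1/2$, so that both $\theta\beta-\beta_0>0$ and $(1-\theta)\beta-\beta_0>0$.

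Let $\mathcal X_\beta\times\Lambda_\beta$ be the neighborhood of Proposition~\ref{prop:fund}, shrunk to be closed and convex. Pick $\rho_\beta>0$ with $\mathbb B(\bar\lambda,2\rho_\beta)\subseteq\operatorname{int}\Lambda_\beta$. Since $(\bar x,\bar\lambda)$ is a saddle point of $l_{\beta_0}$, we may take $\lambda^*=\bar\lambda$ in \eqref{ineq:fund}. The left-hand side of \eqref{ineq:fund} is then nonnegative: $l_{\beta_0}(x^{k+1},\bar\lambda)\ge l_{\beta_0}(\bar x,\bar\lambda)$ by the saddle property, and $x^{k+1}\in\mathcal C$ makes $l_{\beta_0}$ finite. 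Hence $E_{k+1}(\bar\lambda)\le E_k(\bar\lambda)$, provided $\lambda^k\in\Lambda_\beta\cap\partial g(y^k)$.

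\textbf{Induction.} We prove by induction that $\lambda^k\in\partial g(y^k)$ and $\beta E_k(\bar\lambda)\le\rho_\beta^2$ for all $k$.
\begin{enumerate}
\item The base case is exactly \eqref{the:initial}.
\item The subdifferential inclusion propagates by \eqref{eq:moreau-grad}.
\item $\beta E_k\le\rho_\beta^2$ gives $\|\lambda^k-\bar\lambda\|\le\rho_\beta$, so $\lambda^k\in\Lambda_\beta$.
\item Together with the hypothesis $x^{k+1}\in\mathcal X_\beta$, this lets us apply Proposition~\ref{prop:fund}.
\end{enumerate}
Summing \eqref{ineq:fund} then yields
\[
\sum_k\|y^{k+1}-y^k\|^2<\infty,\qquad \sum_k\|\lambda^{k+1}-\lambda^k\|^2<\infty,\qquad \sum_k\bigl[\,l_{\beta_0}(x^{k+1},\bar\lambda)-l_{\beta_0}(\bar x,\bar\lambda)\,\bigr]<\infty .
\]
Strong convexity of $l_{\beta_0}(\cdot,\bar\lambda)$ on $\mathcal X_\beta$, with minimizer $\bar x$ (from $0\in\partial_x l_{\beta_0}(\bar x,\bar\lambda)$), gives $\tfrac c2\|x^{k+1}-\bar x\|^2\le l_{\beta_0}(x^{k+1},\bar\lambda)-l_{\beta_0}(\bar x,\bar\lambda)$. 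Therefore $x^k\to\bar x$.

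Next, $\lambda^{k+1}-\lambda^k=\beta(Ax^{k+1}-y^{k+1})\to0$ implies $y^k\to A\bar x$. The sequence $\{\lambda^k\}$ is bounded in $\mathbb B(\bar\lambda,\rho_\beta)$. Take any cluster point $\widehat\lambda$. Closedness of $\operatorname{gph}\partial g$ gives $\widehat\lambda\in\partial g(A\bar x)$. The optimality condition of \eqref{alg:cl1-ADMM1} reads
\[
0\in\nabla f(x^{k+1})+A^\top\lambda^k+\beta A^\top(Ax^{k+1}-y^k)+N_{\mathcal C}(x^{k+1}).
\]
Here $Ax^{k+1}-y^k\to0$, because $y^{k+1}-y^k\to0$ and $Ax^{k+1}-y^{k+1}\to0$. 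Outer semicontinuity of $N_{\mathcal C}$ then gives $0\in\nabla f(\bar x)+A^\top\widehat\lambda+N_{\mathcal C}(\bar x)$. Hence $\widehat\lambda\in\Lambda_\beta^*\cap\mathbb B(\bar\lambda,\rho_\beta)$.

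\textbf{Main obstacle: convergence of the whole multiplier sequence.} The multiplier set $\Lambda^*_\beta$ may fail to be a singleton, so we need more than a cluster point. We will rerun Proposition~\ref{prop:fund} with $\lambda^*=\widehat\lambda$. This requires $(\bar x,\widehat\lambda)$ to be a saddle point of $l_{\beta_0}$ relative to $\mathcal X_\beta\times\Lambda_\beta$. That holds because $\widehat\lambda$ satisfies \eqref{first order condition1}, hence \eqref{first order condition2}, and the convex–concave structure makes first-order points saddle points. We also need $\widehat\lambda$ to be interior to $\Lambda_\beta$, which the factor $2\rho_\beta$ in the choice of $\rho_\beta$ guarantees. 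Then $E_k(\widehat\lambda)$ is nonincreasing, hence convergent. Along the subsequence converging to $\widehat\lambda$ we have $y^{k_j}\to\bar y$, so $E_{k_j}(\widehat\lambda)\to0$, and therefore the limit of $E_k(\widehat\lambda)$ is $0$. This gives $\lambda^k\to\widehat\lambda$.

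The delicate point is the sign of the left side of \eqref{ineq:fund} for a general saddle multiplier $\lambda^*$. It needs $l_{\beta_0}(x^{k+1},\lambda^*)\ge l_{\beta_0}(\bar x,\lambda^*)$, which uses the saddle inequality in $x$ over $\mathcal X_\beta\cap\mathcal C$ together with $x^{k+1}\in\mathcal C$. We will verify this carefully. The closedness of $\mathbb B(\bar\lambda,\rho_\beta)$ keeps $\widehat\lambda$ in that ball.
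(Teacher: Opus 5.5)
Your proposal is correct and follows essentially the same route as the paper. It uses Fejér monotonicity from Proposition~\ref{prop:fund} with $\theta=1/2$ and $\lambda^*=\bar\lambda$ to keep $\lambda^k$ in $\Lambda_\beta$ by induction, then summation for $x^k\to\bar x$ and vanishing increments, then a cluster point $\widehat\lambda\in\Lambda_\beta^*$ via graph closedness, and finally a second application of the descent inequality with $\lambda^*=\widehat\lambda$ to upgrade to full convergence. The differences are cosmetic: you use strong convexity only at $\bar\lambda$, where the paper records a uniform modulus $\sigma$ that it reuses in Proposition~\ref{prop-linear}. Your margin $2\rho_\beta$ is also unnecessary, since $\widehat\lambda\in\mathbb B(\bar\lambda,\rho_\beta)\subseteq\Lambda_\beta$ already makes $(\bar x,\widehat\lambda)$ a saddle point.
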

\begin{proof}
	By Propositions~\ref{prop:local duality} and~\ref{prop:fund}, there exists $\beta_0>0$ such that, for every $\beta>2\beta_0$, there exist closed convex neighborhoods $\mathcal{X}_{\beta}$ of $\bar{x}$ and $\Lambda_{\beta}$ of $\bar{\lambda}$, respectively, satisfying the property stated in Proposition~\ref{prop:fund}. Since $\Lambda_\beta$ is a convex neighborhood of $\bar\lambda$, one can choose $\rho_\beta>0$ so that $\mathbb B(\bar\lambda,\rho_\beta)
		\subseteq\operatorname{int}\Lambda_\beta$.
    
	We first show that the assumptions of Proposition~\ref{prop:fund} remain valid at every iteration. Set $\bar y:=A\bar x$. Since $(\bar x,\bar\lambda)$ satisfies~\eqref{first order condition1}, the pair $(\bar x,\bar\lambda)$ is a saddle point of $\ell_{\beta_0}$ relative to $\mathcal X_\beta\times\Lambda_\beta$. Applying Proposition~\ref{prop:fund} with $\theta=1/2$ and $\lambda^*=\bar\lambda$ gives
	\[
	\begin{aligned}
		\beta^2\|y^{k+1}-\bar y\|^2
		+\|\lambda^{k+1}-\bar\lambda\|^2\leq
		\beta^2\|y^k-\bar y\|^2
		+\|\lambda^k-\bar\lambda\|^2.
	\end{aligned}
	\]
	Hence, by~\eqref{the:initial},
	\[
		\lambda^k\in
		\mathbb B(\bar\lambda,\rho_\beta)
		\subseteq\operatorname{int}\Lambda_\beta,
		\qquad k\in\mathbb N.
	\]
	Moreover, the optimality condition of the $y$-subproblem and~\eqref{alg:cl1-ADMM3} imply
	\[
		\lambda^{k+1}\in\partial g(y^{k+1}).
	\]
	Therefore, starting from $\lambda^0\in\partial g(y^0)$ and using the assumption $x^{k+1}\in\mathcal X_\beta$, Proposition~\ref{prop:fund} applies inductively for all $k\in\mathbb N$.

		By the construction in the proof of Proposition~\ref{prop:local duality}, there exists a constant $\sigma>0$, independent of $\lambda\in\Lambda_\beta$, such that $\ell_{\beta_0}(\cdot,\lambda)$ is $\sigma$-strongly convex on $\mathcal X_\beta$. For any $\lambda^*\in\Lambda_\beta^*$, the equivalence of \eqref{first order condition1} and \eqref{first order condition2} implies that $(\bar x,\lambda^*)$ is a saddle point of $\ell_{\beta_0}$ relative to $\mathcal X_\beta\times\Lambda_\beta$.  Consequently,
	\[
		\ell_{\beta_0}(x, \lambda^*)
		-\ell_{\beta_0}(\bar x, \lambda^*)
		\geq
		\frac{\sigma}{2}\|x-\bar x\|^2,
		\qquad x\in\mathcal X_\beta.
	\]
	Using Proposition~\ref{prop:fund} with $\theta=1/2$, we obtain
	\begin{equation}
	\begin{aligned}
		\frac{\sigma}{2}\|x^{k+1}-\bar x\|^2
		\leq{}&
		\frac{\beta}{2}
		\bigl(
			\|y^k-\bar y\|^2
			-\|y^{k+1}-\bar y\|^2
		\bigr)\\
		&+\frac{1}{2\beta}
		\bigl(
			\|\lambda^k- \lambda^*\|^2
			-\|\lambda^{k+1}- \lambda^*\|^2
		\bigr)\\
		&-\frac{\beta}{2}\|y^{k+1}-y^k\|^2
		-\frac{\beta-2\beta_0}{4\beta^2}
		\|\lambda^{k+1}-\lambda^k\|^2.
	\end{aligned}
	\label{ineq-fund-2}
	\end{equation}
	Summing~\eqref{ineq-fund-2} from $k=0$ to $K$ yields
	\begin{equation}
	\begin{aligned}
		&\frac{\sigma}{2}\sum_{k=0}^{K}
		\|x^{k+1}-\bar x\|^2
		+\frac{\beta}{2}\sum_{k=0}^{K}
		\|y^{k+1}-y^k\|^2
		+\frac{\beta-2\beta_0}{4\beta^2}
		\sum_{k=0}^{K}
		\|\lambda^{k+1}-\lambda^k\|^2\\
		&\leq
		\frac{\beta}{2}\|y^0-\bar y\|^2
		+\frac{1}{2\beta}
		\|\lambda^0- \lambda^*\|^2.
	\end{aligned}
	\label{eq:local-summability}
	\end{equation}
	Letting $K\to\infty$ gives
	\[
		x^k\to\bar x,\qquad
		y^{k+1}-y^k\to0,\qquad
		\lambda^{k+1}-\lambda^k\to0.
	\]
	Since
	\[
		y^{k+1}
		=
		Ax^{k+1}
		-\frac{1}{\beta}
		(\lambda^{k+1}-\lambda^k),
	\]
	we have
	\[
		\|y^{k+1}-\bar y\|
		\leq
		\|Ax^{k+1}-A\bar x\|
		+\frac{1}{\beta}\|\lambda^{k+1}-\lambda^k\|,
	\]
	which implies that $y^k\to\bar y=A\bar x$.

Since $\{\lambda^k\}$ is contained in the compact set $\mathbb B(\bar\lambda,\rho_\beta)$, it admits a cluster point $\widehat\lambda$. Let $\lambda^{k_j}\to\widehat\lambda$. Because $\lambda^{k_j+1}-\lambda^{k_j}\to0$, we also have $\lambda^{k_j+1}\to\widehat\lambda$. The closedness of the graph of $\partial g$, together with $y^{k_j+1}\to\bar y$ and $\lambda^{k_j+1}\in\partial g(y^{k_j+1})$, yields $\widehat\lambda\in\partial g(\bar y)$.

On the other hand, the optimality condition of the $x$-subproblem and~\eqref{alg:cl1-ADMM3} give
\[
    0\in
    \nabla f(x^{k+1})
    +A^\top\lambda^{k+1}
    +\beta A^\top(y^{k+1}-y^k)
    +N_{\mathcal C}(x^{k+1}).
\]
Passing to the limit along $\{k_j\}$ and using the closedness of the graph of $N_{\mathcal C}$, we obtain
\[
    0\in
    \nabla f(\bar x)
    +A^\top\widehat\lambda
    +N_{\mathcal C}(\bar x).
\]
Therefore, by the definition of \(\Lambda_\beta^*\), $\widehat\lambda\in\Lambda_\beta^*$. Finally, applying~\eqref{ineq-fund-2} with $\lambda^*=\widehat\lambda$ shows that the sequence
\[
    \frac{\beta}{2}\|y^k-\bar y\|^2
    +\frac{1}{2\beta}
    \|\lambda^k-\widehat\lambda\|^2
\]
is nonincreasing. Moreover, this sequence converges to zero
along the subsequence \(\{k_j\}\).
Hence the entire sequence converges to zero. Therefore, $\lambda^k\to\widehat\lambda$,  and the inclusion
\[
    \widehat \lambda\in
    \mathbb B(\bar\lambda,\rho_\beta)
    \subseteq\operatorname{int}\Lambda_\beta
\]
follows directly from the preceding invariance estimate.
\end{proof}

Next, we prove the local linear convergence of  the ADMM scheme \eqref{alg:classical ADMM1} for the case where $\mathcal{C}$ is a polyhedral convex set. Suppose that $(\bar{x},\bar{\lambda})$ satisfies Assumption~\ref{assp:ADMM}.  Let \(u:=(x,y,\lambda)\) and define
\begin{equation}\label{eq:T_map}
\mathfrak R(u)
    :=
    \begin{bmatrix}
        \nabla f(\bar x)
        +\nabla^2f(\bar x)(x-\bar x)
        +A^\top\lambda+N_{\mathcal C}(x)\\
        \partial g(y)-\lambda\\
        Ax-y
    \end{bmatrix}.
\end{equation}   
Then, by Corollary \ref{coro-unique-x} and Proposition \ref{prop:local duality}, the following lemma characterizes the local zero set of $\mathfrak{R}$.
\begin{lemma}\label{lem-T-solution}
    Assume that $(\bar{x}, \bar{\lambda})$ satisfies Assumption~\ref{assp:ADMM}. Let $\beta$ and $\mathcal{X}_{\beta}\times\Lambda_{\beta}\subseteq\mathbb{R}^n\times\mathbb{R}^m$ as stated in Proposition \ref{prop:local-convergence}. Then, it holds that
    \begin{align}\label{eq-local-solution}
        \mathfrak{R}^{-1}(0)\cap \mathcal{X}_{\beta}\times\mathbb{R}^m\times\Lambda_{\beta}=\{\bar{x}\}\times\{A\bar{x}\}\times \Lambda^*_{\beta}.
    \end{align}
\end{lemma}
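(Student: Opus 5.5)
We prove the two inclusions separately. For ``$\supseteq$'', take $\lambda\in\Lambda^*_\beta$ and $u=(\bar x,A\bar x,\lambda)$. The third component of $\mathfrak R(u)$ is $A\bar x-A\bar x=0$. The second component contains $0$ because $\lambda\in\partial g(A\bar x)$. The first component is $\nabla f(\bar x)+A^\top\lambda+N_{\mathcal C}(\bar x)$, which contains $0$ by the definition of $\Lambda^*_\beta$. Since $\bar x\in\mathcal X_\beta$ and $\lambda\in\Lambda_\beta$, the point $u$ lies in the intersected set.

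For ``$\subseteq$'', let $u=(x,y,\lambda)\in\mathfrak R^{-1}(0)$ with $x\in\mathcal X_\beta$ and $\lambda\in\Lambda_\beta$. From the three components we read off $y=Ax$, $\lambda\in\partial g(Ax)$, $x\in\mathcal C$ (because $N_{\mathcal C}(x)\neq\emptyset$), and
\[
0\in\nabla f(\bar x)+\nabla^2 f(\bar x)(x-\bar x)+A^\top\lambda+N_{\mathcal C}(x).
\]
The goal is to show $x=\bar x$. Once that holds, the remaining conditions say exactly that $y=A\bar x$ and $\lambda\in\Lambda^*_\beta$.

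The natural tool is Corollary~\ref{coro-unique-x}, but the linearized $f$ in $\mathfrak R$ is an obstacle: $(x,\lambda)$ is a saddle point of the reduced Lagrangian built with the quadratic model $q(x):=f(\bar x)+\nabla f(\bar x)^\top(x-\bar x)+\frac12(x-\bar x)^\top\nabla^2 f(\bar x)(x-\bar x)$, not with $f$ itself. We handle this by noting that $q$ satisfies Assumption~\ref{assp:ADMM} at $(\bar x,\bar\lambda)$ with the same data: $\nabla q(\bar x)=\nabla f(\bar x)$ and $\nabla^2q\equiv\nabla^2 f(\bar x)$. The argument of Proposition~\ref{prop:local duality} then applies even more cleanly, because $\Psi_{\beta_0}$ with $q$ in place of $f$ is strongly convex on all of $\mathbb R^n$ restricted to $\mathcal C$. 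Convexity of $\Phi_{\beta_0}(\cdot,\lambda)$ on $\mathcal X_\beta$ for $\lambda\in\Lambda_\beta$ is unchanged. So the reduced Lagrangian $\tilde l_{\beta_0}$ built from $q$ is strongly convex in $x$ on $\mathcal X_\beta$ and concave in $\lambda$ on $\Lambda_\beta$. Here we must check that the neighborhoods chosen in Proposition~\ref{prop:local-convergence} (through Proposition~\ref{prop:fund}) can be taken small enough to serve for $q$ as well. If necessary we shrink them at the outset, which is harmless, or we observe that the only $f$-dependent ingredient, $\Psi_{\beta_0}$, is automatically better for $q$.

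Next we use \eqref{eq:partial}. The conditions $\lambda\in\partial g(Ax)$ and $0\in\nabla q(x)+A^\top\lambda+N_{\mathcal C}(x)$ translate into $0\in\partial_x\tilde l_{\beta_0}(x,\lambda)$ and $0\in\partial_\lambda[-\tilde l_{\beta_0}](x,\lambda)$, exactly as in \eqref{first order condition2}. By the convexity and concavity just established on the convex set $\mathcal X_\beta\times\Lambda_\beta$, these stationarity conditions make $(x,\lambda)$ a saddle point of $\tilde l_{\beta_0}$ relative to $\mathcal X_\beta\times\Lambda_\beta$. The same holds for $(\bar x,\bar\lambda)$. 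We then repeat the chain of inequalities from Corollary~\ref{coro-unique-x} for $\tilde l_{\beta_0}$, and strong convexity in $x$ forces $x=\bar x$.

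The main obstacle is this transfer from $f$ to its quadratic model while keeping the neighborhoods fixed. The key observation to carry it out is that the strong convexity of $\Psi_{\beta_0}$ came only from the positive definiteness of $\nabla^2 f(\bar x)+\beta_0A^\top PA$ on $V_{\mathcal C}$, which is exactly the Hessian of the model.
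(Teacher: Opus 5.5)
Your proposal is correct and follows essentially the same route as the paper. Like the paper, you replace $f$ by its second-order model at $\bar x$, rerun the argument of Proposition~\ref{prop:local duality} to get strong convexity in $x$ and concavity in $\lambda$ of the resulting reduced Lagrangian on $\mathcal X_\beta\times\Lambda_\beta$, use \eqref{eq:partial} to see that both $(x,\lambda)$ and $(\bar x,\bar\lambda)$ are saddle points, and conclude $x=\bar x$ by the chain of inequalities from Corollary~\ref{coro-unique-x}. The only cosmetic difference is that the paper builds the model Lagrangian with $M_g^{1/\beta}$, relying on the convexity of $\Phi_\beta(\cdot,\lambda)$ listed in the proof of Proposition~\ref{prop:fund}, whereas you use $\beta_0$, relying on the convexity of $\Phi_{\beta_0}(\cdot,\lambda)$ inherited from applying Proposition~\ref{prop:local duality} at $\beta_0$; both hold on the given neighborhoods, so no shrinking is needed.
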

\begin{proof}
 Let \begin{align*}
        \hat{l}_{\beta}(x,\lambda):=&\langle\nabla f(\bar{x}),x-\bar{x}\rangle+\frac{1}{2}\langle x-\bar{x},\nabla^2 f(\bar{x})(x-\bar{x})\rangle\\
        &+M^{1/\beta}_g(Ax+\frac{\lambda}{\beta})
        +\delta_{\mathcal{C}}(x)-\frac{1}{2\beta}\|\lambda\|^2.
    \end{align*}
	For \((\widehat x,\widehat y,\widehat \lambda)\in \mathfrak R^{-1}(0)\cap (\mathcal X_\beta\times\mathbb R^m\times\Lambda_\beta)\), by the definition of $\mathfrak{R}$ and relation~\eqref{eq:partial}, it holds that \(\widehat y=A\widehat x\) and 
	\[
		0\in\partial_x\widehat\ell_\beta(\widehat x,\widehat \lambda),
		\qquad
		0\in\partial_\lambda
		[-\widehat\ell_\beta](\widehat x,\widehat \lambda).
	\]
	By an argument similar to the proof of Proposition \ref{prop:local duality}, $\hat{l}_{\beta}(x,\lambda)$ is strongly convex in $x$ when $\lambda\in\Lambda_{\beta}$  and concave in $\lambda$ when $x\in\mathcal{X}_{\beta}\cap\mathcal{C}$.
	Hence, \((\widehat x,\widehat \lambda)\) is a saddle point of
	\(\widehat\ell_\beta\) relative to $\mathcal{X}_{\beta}\times\Lambda_{\beta}$. Moreover, Assumption~\ref{assp:ADMM}(i) and relation~\eqref{eq:partial} imply that
\((\bar x,\bar\lambda)\) is also a saddle point of
\(\widehat\ell_\beta\) relative to
\(\mathcal X_\beta\times\Lambda_\beta\).
Hence, the argument in Corollary~\ref{coro-unique-x} yields
\(\widehat x=\bar x\). The definition of \(\mathfrak R\) further
	yields
	\[
		\widehat \lambda\in\partial g(A\bar x),
		\qquad
		0\in\nabla f(\bar x)+A^\top \widehat \lambda
		+N_{\mathcal C}(\bar x),
	\]
	and thus \(\widehat \lambda\in\Lambda_\beta^*\). Conversely, every \(\widehat \lambda\in\Lambda_\beta^*\) satisfies
	\[
		(\bar x,A\bar x,\widehat \lambda)\in\mathfrak R^{-1}(0).
	\]
	This proves~\eqref{eq-local-solution}.
\end{proof}

By the optimality conditions of the subproblems of ADMM scheme \eqref{alg:classical ADMM1}, we obtain the estimate of $\mathrm{dist}(0, \mathfrak{R}(u^{k}))$.
\begin{lemma}\label{lem-ineq-T}
Assume that $(\bar{x}, \bar{\lambda})$ satisfies Assumption~\ref{assp:ADMM}. Let the sequence $u^{k+1}:=(x^{k+1},y^{k+1},\lambda^{k+1})$ be generated by ADMM scheme \eqref{alg:classical ADMM1} under the conditions of Proposition \ref{prop:local-convergence}.
Then, for any $\gamma>0$, there exists $K\in \mathbb{N}$ such that for all $k\geq K$, 
\begin{equation}\label{ineq-T}
    \begin{aligned}
    \mathrm{dist}^2(0, \mathfrak{R}(u^{k+1}))\leq &2\beta^2\|A\|^2\|y^{k+1}-y^k\|^2\\
    &+2\gamma^2\|x^{k+1}-\bar{x}\|^2 +\frac{1}{\beta^2}\|\lambda^{k+1}-\lambda^k\|^2.
\end{aligned}
\end{equation}
\end{lemma}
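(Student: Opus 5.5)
The plan is to produce an explicit element of $\mathfrak R(u^{k+1})$ from the optimality conditions of the ADMM subproblems, and then bound its norm componentwise.

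\emph{Component two.} From \eqref{alg:cl1-ADMM2}, \eqref{alg:cl1-ADMM3} (equivalently \eqref{eq:moreau-grad}) we have $\lambda^{k+1}\in\partial g(y^{k+1})$. Hence $0\in\partial g(y^{k+1})-\lambda^{k+1}$, and this block contributes nothing.

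\emph{Component three.} It is single-valued, and by \eqref{alg:cl1-ADMM3}
\[
Ax^{k+1}-y^{k+1}=\tfrac{1}{\beta}(\lambda^{k+1}-\lambda^k).
\]
Its squared norm is therefore exactly $\frac{1}{\beta^2}\|\lambda^{k+1}-\lambda^k\|^2$.

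\emph{Component one.} The optimality condition of the $x$-subproblem, rewritten with \eqref{alg:cl1-ADMM3} as in the proof of Proposition~\ref{prop:local-convergence}, gives
\[
-\nabla f(x^{k+1})-A^\top\lambda^{k+1}-\beta A^\top(y^{k+1}-y^k)\in N_{\mathcal C}(x^{k+1}).
\]
Adding $\nabla f(\bar x)+\nabla^2 f(\bar x)(x^{k+1}-\bar x)+A^\top\lambda^{k+1}$ to this element of $N_{\mathcal C}(x^{k+1})$ yields the following element of the first block of $\mathfrak R(u^{k+1})$:
\[
\xi^{k+1}:=\nabla f(\bar x)+\nabla^2 f(\bar x)(x^{k+1}-\bar x)-\nabla f(x^{k+1})-\beta A^\top(y^{k+1}-y^k).
\]
By $(a+b)^2\le 2a^2+2b^2$,
\[
\|\xi^{k+1}\|^2\le 2\beta^2\|A\|^2\|y^{k+1}-y^k\|^2+2\|\nabla f(x^{k+1})-\nabla f(\bar x)-\nabla^2 f(\bar x)(x^{k+1}-\bar x)\|^2 .
\]

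\emph{The Taylor remainder.} This is the only step with genuine content. By Assumption~\ref{assp:ADMM}(ii), $\nabla^2 f$ exists and is continuous on the convex neighborhood $U_{\bar x}$. For $x\in U_{\bar x}$ the mean value theorem in integral form gives
\[
\nabla f(x)-\nabla f(\bar x)-\nabla^2 f(\bar x)(x-\bar x)=\int_0^1\bigl(\nabla^2 f(\bar x+s(x-\bar x))-\nabla^2 f(\bar x)\bigr)(x-\bar x)\,ds .
\]
Fix $\gamma>0$. By continuity of $\nabla^2 f$ at $\bar x$, there is $\delta>0$ with $\mathbb B(\bar x,\delta)\subseteq U_{\bar x}$ and $\|\nabla^2 f(z)-\nabla^2 f(\bar x)\|\le\gamma$ on $\mathbb B(\bar x,\delta)$. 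Hence the remainder is at most $\gamma\|x-\bar x\|$ whenever $\|x-\bar x\|\le\delta$.

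Proposition~\ref{prop:local-convergence} gives $x^k\to\bar x$, so there is $K$ with $\|x^{k+1}-\bar x\|\le\delta$ for all $k\ge K$. This is where the index $K$, depending on $\gamma$, enters.

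\emph{Conclusion.} Summing the squared norms of the three blocks gives
\[
\operatorname{dist}^2\bigl(0,\mathfrak R(u^{k+1})\bigr)\le\|\xi^{k+1}\|^2+0+\tfrac{1}{\beta^2}\|\lambda^{k+1}-\lambda^k\|^2 ,
\]
which is exactly \eqref{ineq-T} for all $k\ge K$.
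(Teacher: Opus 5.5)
Your proposal is correct and follows the paper's proof: it builds the same element of $\mathfrak R(u^{k+1})$ from the subproblem optimality conditions (your $\xi^{k+1}$ together with the blocks $0$ and $\tfrac{1}{\beta}(\lambda^{k+1}-\lambda^k)$), bounds it blockwise, and uses $x^k\to\bar x$ from Proposition~\ref{prop:local-convergence} to obtain $K$. Your integral-form Taylor argument just spells out the step that the linearization remainder is at most $\gamma\|x^{k+1}-\bar x\|$, which the paper asserts without detail.
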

\begin{proof}
    By the optimality conditions of the subproblems of ADMM scheme \eqref{alg:classical ADMM1}, we have 
    \begin{align*}
\begin{bmatrix}
\beta A^{\top}(y^k-y^{k+1})+s^{k+1}\\
0\\ \frac{1}{\beta}(\lambda^{k+1}-\lambda^k)
\end{bmatrix}\in \mathfrak{R}(u^{k+1}),
    \end{align*}
    where \begin{align*}
        s^{k+1}=\nabla f(\bar{x}) + \nabla^2 f(\bar{x})(x^{k+1} - \bar{x})-\nabla f(x^{k+1}).
    \end{align*}
    By Proposition \ref{prop:local-convergence}, $\{x^{k+1}\}$ converges to $\bar{x}$, hence, for given $\gamma>0$, there exists $K\in\mathbb{N}$ such that for all $k\geq K$, $\|s^{k+1}\|\leq\gamma\|x^{k+1}-\bar{x}\|$.
     Then, by Cauchy-Schwarz inequality, we obtain \eqref{ineq-T}.
\end{proof}

Based on Lemma \ref{lem-T-solution} and Lemma \ref{lem-ineq-T}, we now present the
local linear convergence result for the ADMM scheme \eqref{alg:classical ADMM1} applied to Problem \eqref{prob:linear-eq}. For \(v:=(y,\lambda)\in\mathbb R^m\times\mathbb R^m\) and a nonempty closed set
\(\mathcal V\subseteq\mathbb R^m\times\mathbb R^m\), define
\[
    \|v\|_\beta^2:=\beta\|y\|^2+\frac{1}{\beta}\|\lambda\|^2,\quad \mathrm{dist}_\beta(v,\mathcal V)
    :=
    \inf_{\widehat v\in\mathcal V}
    \|v-\widehat v\|_\beta.
\]
\begin{proposition}\label{prop-linear}
    Assume that $(\bar{x}, \bar{\lambda})$ satisfies Assumption~\ref{assp:ADMM} and $\mathcal{C}$ is a polyhedral convex set. Let the sequence $(x^{k+1},y^{k+1},\lambda^{k+1})$  be generated by ADMM scheme \eqref{alg:classical ADMM1} under the conditions of Proposition \ref{prop:local-convergence}. Set
    \[
         v^k:=(y^k,\lambda^k),
        \qquad
        \mathcal V^*
        :=
        \{A\bar x\}\times\Lambda_\beta^*.
    \]
    Then, $\mathrm{dist}_{\beta}(v^k,\mathcal{V}^*)$ converges Q-linearly to 0. Consequently,
    \(\mathrm{dist}(v^k,\mathcal V^*)\) and
    \(\{x^k\}\) converge R-linearly.
\end{proposition}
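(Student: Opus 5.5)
The plan is to combine the descent inequality \eqref{ineq-fund-2}, the residual estimate of Lemma~\ref{lem-ineq-T}, and the error bound of Proposition~\ref{prop-error} applied to $\mathfrak R$ in \eqref{eq:T_map}. Since $\mathcal C$ is polyhedral, $N_{\mathcal C}$ and $\partial g$ are piecewise linear. Adding the affine map $(x,y,\lambda)\mapsto(\nabla f(\bar x)+\nabla^2 f(\bar x)(x-\bar x)+A^\top\lambda,\,-\lambda,\,Ax-y)$ and taking a Cartesian product keeps $\mathfrak R$ piecewise linear. Hence there is $\kappa>0$ with $\mathrm{dist}(u,\mathfrak R^{-1}(0))\le\kappa\,\mathrm{dist}(0,\mathfrak R(u))$ on a bounded set containing all iterates, which are bounded by Proposition~\ref{prop:local-convergence}.

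The first step is to localize the projection onto $\mathfrak R^{-1}(0)$ so that Lemma~\ref{lem-T-solution} applies. By Proposition~\ref{prop:local-convergence}, $u^k\to(\bar x,A\bar x,\widehat\lambda)$ with $\widehat\lambda\in\operatorname{int}\Lambda_\beta$. Take $x^k$ near $\bar x$ (shrinking $\mathcal X_\beta$ slightly if necessary, or simply using $\bar x\in\operatorname{int}\mathcal X_\beta$). Then for large $k$ any nearest point $\widehat u^k\in\mathfrak R^{-1}(0)$ lies within $\|u^k-(\bar x,A\bar x,\widehat\lambda)\|\to0$ of the limit, hence in $\mathcal X_\beta\times\mathbb R^m\times\Lambda_\beta$. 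By \eqref{eq-local-solution}, $\widehat u^k\in\{\bar x\}\times\{A\bar x\}\times\Lambda^*_\beta$. Consequently
\[
\|x^{k+1}-\bar x\|^2+\mathrm{dist}^2(v^{k+1},\mathcal V^*)\le\kappa^2\,\mathrm{dist}^2(0,\mathfrak R(u^{k+1})).
\]
Lemma~\ref{lem-ineq-T} bounds the right side by $2\kappa^2\beta^2\|A\|^2\|y^{k+1}-y^k\|^2+2\kappa^2\gamma^2\|x^{k+1}-\bar x\|^2+\kappa^2\beta^{-2}\|\lambda^{k+1}-\lambda^k\|^2$. Choosing $\gamma$ with $2\kappa^2\gamma^2\le1/2$ absorbs the $x$-term. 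This yields $\mathrm{dist}_\beta^2(v^{k+1},\mathcal V^*)\le c\big(\tfrac{\beta}{2}\|y^{k+1}-y^k\|^2+\tfrac{\beta-2\beta_0}{4\beta^2}\|\lambda^{k+1}-\lambda^k\|^2\big)$ for a constant $c$ depending on $\beta,\beta_0,\kappa,\|A\|$; this uses $\beta>2\beta_0$.

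Next, apply \eqref{ineq-fund-2} with $\lambda^*:=\Pi_{\Lambda^*_\beta}$-type choice. Precisely, let $\lambda^*$ be the $\lambda$-component of a $\|\cdot\|_\beta$-nearest point of $\mathcal V^*$ to $v^k$; it exists because $\Lambda_\beta^*$ is closed, as $\Lambda_\beta$ is closed. It is admissible since every element of $\Lambda^*_\beta$ gives a saddle point, as shown in the proof of Proposition~\ref{prop:local-convergence}. Dropping the nonnegative $x$-term, the right side of \eqref{ineq-fund-2} (times 2) gives
\[
\mathrm{dist}_\beta^2(v^{k+1},\mathcal V^*)\le\|v^{k+1}-(A\bar x,\lambda^*)\|_\beta^2\le\mathrm{dist}_\beta^2(v^k,\mathcal V^*)-2D_k,
\]
where $D_k$ is the bracketed descent quantity above. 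Combining this with $\mathrm{dist}_\beta^2(v^{k+1},\mathcal V^*)\le cD_k$ gives $(1+2/c)\,\mathrm{dist}_\beta^2(v^{k+1},\mathcal V^*)\le\mathrm{dist}_\beta^2(v^k,\mathcal V^*)$. This is the Q-linear rate, with factor $(1+2/c)^{-1/2}<1$. Equivalence of norms gives R-linear convergence of $\mathrm{dist}(v^k,\mathcal V^*)$. For $\{x^k\}$, the error bound estimate yields $\|x^{k+1}-\bar x\|^2\lesssim D_k\le\tfrac12\mathrm{dist}_\beta^2(v^k,\mathcal V^*)$, which decays geometrically, so $x^k\to\bar x$ R-linearly.

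The main obstacle is the localization step: ensuring the nearest point of $\mathfrak R^{-1}(0)$ to $u^{k+1}$ lies in $\mathcal X_\beta\times\mathbb R^m\times\Lambda_\beta$, where Lemma~\ref{lem-T-solution} identifies it. The zero set of $\mathfrak R$ may have other components far away, and $\Lambda^*_\beta$ only describes the local part. I would handle this using $\widehat\lambda\in\operatorname{int}\Lambda_\beta$ and $\bar x\in\operatorname{int}\mathcal X_\beta$: for large $k$ the distance to $\mathfrak R^{-1}(0)$ tends to zero, so near points stay in the interior. A secondary point is that the distance in the error bound is measured on the full $u$, but dropping the $x$ and $y$ slack is harmless.
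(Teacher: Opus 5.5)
Your proposal is correct and follows the paper's proof: you localize the error bound for the piecewise linear $\mathfrak R$ to $\mathcal U^*=\{(\bar x,A\bar x)\}\times\Lambda_\beta^*$ via Lemma~\ref{lem-T-solution} and $\widehat\lambda\in\operatorname{int}\Lambda_\beta$, combine it with \eqref{ineq-fund-2} at the $\|\cdot\|_\beta$-projection of $v^k$ onto $\mathcal V^*$ and with Lemma~\ref{lem-ineq-T}, and obtain a contraction of $\mathrm{dist}_\beta^2(v^k,\mathcal V^*)$ and then the R-linear rates. The only difference is bookkeeping: you absorb the $2\gamma^2\|x^{k+1}-\bar x\|^2$ residual term into the left side of the error bound by taking $\gamma$ small relative to $\kappa$ and then drop the $\sigma$-term, whereas the paper keeps $\gamma$ arbitrary and controls that term with the $\sigma\|x^{k+1}-\bar x\|^2$ gain in \eqref{ineq-fund-2}.
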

\begin{proof}
Let $u^k:=(x^k,y^k,\lambda^k)$ and $\mathcal U^*:=\{(\bar x,A\bar x)\}\times\Lambda_\beta^*$. 
By Proposition~\ref{prop:local-convergence},
\(u^k\to u^\infty =(\bar x,A\bar x,\widehat\lambda)\in\mathcal U^*\),
where
\(\widehat\lambda\in\operatorname{int}\Lambda_\beta\).
Thus, Lemma~\ref{lem-T-solution} implies that
\(\mathfrak R^{-1}(0)\) coincides locally with
\(\mathcal U^*\) around \(u^\infty\). Since $\mathcal{C}$ is a polyhedral convex set and $g$ is polyhedral convex, 
$\mathfrak{R}$ is a piecewise linear multifunction (see the discussion preceding Proposition~\ref{prop-error}). Then, Proposition~\ref{prop-error} yields constants
\(K_1\in\mathbb N\) and \(\kappa_1>0\) such that
\begin{align}\label{prop-linear-ineq-1}
    \operatorname{dist}(u^{k+1},\mathcal U^*)
	\leq
	\kappa_1
	\operatorname{dist}
	\bigl(0,\mathfrak R(u^{k+1})\bigr),
	\qquad k\geq K_1.
\end{align}
The set \(\mathcal V^*\) is nonempty and closed; hence, for
each \(k\), there exists
    \[
        \bar v^k
        =
        (\bar y,\bar\lambda^k)
        \in
        \operatorname*{argmin}_{v\in\mathcal V^*}
        \|v^k-v\|_\beta,
        \qquad
        \bar y:=A\bar x.
    \]
Since
    \(\bar\lambda^k\in\Lambda_\beta^*\), inequality~\eqref{ineq-fund-2}, applied
    with \(\lambda^*=\bar\lambda^k\), gives
    \begin{equation}\label{ineq-v_beta}
         \begin{aligned}
        &\|v^k-\bar v^k\|_\beta^2
        -
        \|v^{k+1}-\bar v^k\|_\beta^2\\
        &\quad\geq
        \sigma\|x^{k+1}-\bar x\|^2
        +
        \beta\|y^{k+1}-y^k\|^2
        +
        \frac{\beta-2\beta_0}{2\beta^2}
        \|\lambda^{k+1}-\lambda^k\|^2.
    \end{aligned}
    \end{equation}
On the other hand, by Lemma~\ref{lem-ineq-T}, for any fixed \(\gamma>0\),
    there exists \(K_2\in\mathbb N\) such that, for all \(k\geq K_2\),
    \begin{equation}\label{ineq-lem11}
        \begin{aligned}
        \mathrm{dist}^2\bigl(0,\mathfrak R(u^{k+1})\bigr)
        \leq{}&
        2\beta^2\|A\|^2
        \|y^{k+1}-y^k\|^2\\
        &+
        2\gamma^2\|x^{k+1}-\bar x\|^2
        +
        \frac{1}{\beta^2}
        \|\lambda^{k+1}-\lambda^k\|^2.
    \end{aligned}
    \end{equation}
    Therefore, with
    \[
        \kappa_2
        :=
        \max
        \left\{
            2\beta\|A\|^2,\,
            \frac{2\gamma^2}{\sigma},\,
            \frac{2}{\beta-2\beta_0}
        \right\},
    \]
    inequalities~\eqref{ineq-v_beta} and~\eqref{ineq-lem11} yield
    \begin{equation}\label{ineq-dist-R}
         \begin{aligned}
        \mathrm{dist}^2\bigl(0,\mathfrak R(u^{k+1})\bigr)
        &\leq
        \kappa_2
        \left(
            \|v^k-\bar v^k\|_\beta^2
            -
            \|v^{k+1}-\bar v^k\|_\beta^2
        \right)\\
        &\leq
        \kappa_2
        \left(
            \mathrm{dist}_\beta^2(v^k,\mathcal V^*)
            -
            \mathrm{dist}_\beta^2(v^{k+1},\mathcal V^*)
        \right)
    \end{aligned}
    \end{equation}
    for all \(k\geq K_2\). The second inequality follows from
    \[
        \|v^k-\bar v^k\|_\beta
        =
        \mathrm{dist}_\beta(v^k,\mathcal V^*),\quad 
        \mathrm{dist}_\beta(v^{k+1},\mathcal V^*)
        \leq
        \|v^{k+1}-\bar v^k\|_\beta.
    \]
Set $M_\beta:=\max\{\beta,\beta^{-1}\}$. By the definition of \(\mathcal U^*\) and \(\mathcal V^*\),
    \[
    \begin{aligned}
        \mathrm{dist}_\beta^2(v^{k+1},\mathcal V^*)
        \leq
        M_\beta
        \mathrm{dist}^2(v^{k+1},\mathcal V^*)\leq
        M_\beta
        \mathrm{dist}^2(u^{k+1},\mathcal U^*).
    \end{aligned}
    \]
Set \(K_0:=\max\{K_1,K_2\}\).    Combining this with~\eqref{prop-linear-ineq-1} and~\eqref{ineq-dist-R}, for every \(k\geq K_0\), we obtain
    \[
    \begin{aligned}
        \mathrm{dist}_\beta^2(v^{k+1},\mathcal V^*)
        \leq{}&
        M_\beta\kappa_1^2\kappa_2
        \left(
            \mathrm{dist}_\beta^2(v^k,\mathcal V^*)
            -
            \mathrm{dist}_\beta^2(v^{k+1},\mathcal V^*)
        \right).
    \end{aligned}
    \]
    Let $c_\beta
        :=
        M_\beta\kappa_1^2\kappa_2.$
    It follows that
    \[
        \mathrm{dist}_\beta^2(v^{k+1},\mathcal V^*)
        \leq
        \frac{c_\beta}{1+c_\beta}
        \mathrm{dist}_\beta^2(v^k,\mathcal V^*).
    \]
    Since \(c_\beta/(1+c_\beta)\in(0,1)\),
    \(\mathrm{dist}_\beta(v^k,\mathcal V^*)\) converges Q-linearly.

    Next, set $m_\beta:=\min\{\beta,\beta^{-1}\}$.
    The norm equivalence
    \[
        m_\beta\|v\|^2
        \leq
        \|v\|_\beta^2
        \leq
        M_\beta\|v\|^2
    \]
    implies
    \[
        \mathrm{dist}(v^k,\mathcal V^*)
        \leq
        \frac{1}{\sqrt{m_\beta}}
        \mathrm{dist}_\beta(v^k,\mathcal V^*).
    \]
    Hence, \(\mathrm{dist}(v^k,\mathcal V^*)\) converges R-linearly.
    
    Finally, by~\eqref{prop-linear-ineq-1} and~\eqref{ineq-dist-R},
    \[
    \begin{aligned}
        \|x^{k+1}-\bar x\|
        \leq
        \mathrm{dist}(u^{k+1},\mathcal U^*)\leq
        \kappa_1
        \mathrm{dist}\bigl(0,\mathfrak R(u^{k+1})\bigr)\leq
        \kappa_1\sqrt{\kappa_2}\,
        \mathrm{dist}_\beta(v^k,\mathcal V^*).
    \end{aligned}
    \]
    Therefore, \(\{x^k\}\) converges R-linearly.
\end{proof}

\section{Examples}\label{sec:example}
In this section, we present three illustrative examples and an application-oriented verification for a class of possibly nonconvex quadratic programs to demonstrate the scope of the local convergence results for ADMM scheme \eqref{alg:classical ADMM1} applied to Problem~\eqref{prob:com}. The three illustrative examples illustrate the role of Assumption~\ref{assp:ADMM} and the scope of the proposed local convergence theory, while the application-oriented verification shows how the second-order condition in Assumption~\ref{assp:ADMM} can be checked for quadratic programs with polyhedral convex feasible sets.

\subsection{Illustrative examples}
In this subsection, we present three examples demonstrating the local convergence behavior of ADMM scheme \eqref{alg:classical ADMM1} for solving Problem~\eqref{prob:com}. The first example demonstrates the local linear convergence of \eqref{alg:classical ADMM1} under the condition specified in Assumption~\ref{assp:ADMM} with $\mathcal{C}$ being a polyhedral convex set. The second example shows that if Assumption~\ref{assp:ADMM} is not satisfied, one cannot guarantee the local convergence of ADMM scheme \eqref{alg:classical ADMM1}. The third example highlights that, even when Assumption~\ref{assp:ADMM}  holds, the global convergence of ADMM scheme \eqref{alg:classical ADMM1} cannot be guaranteed.

\begin{example}
    Consider the problem 
    \begin{align}\label{exp-prob1}
        \min_{x\in[-\frac{1}{4},\frac{1}{4}]}~-\frac{1}{2}x^2+\vert x\vert.
    \end{align} 
    This problem can be  equivalently reformulated as 
    Problem \eqref{prob:linear-eq} with \[f(x):=-\frac{1}{2}x^2,~g(y):=\vert y\vert,~A=[1],~\mathcal{C}:=[-\frac{1}{4},\frac{1}{4}].\] 
    The ADMM iterative scheme \eqref{alg:classical ADMM1} in this case can be formulated explicitly as 
    \begin{align*}
        x^{k+1}&=\min\{\frac{1}{4},\max\{-\frac{1}{4},\frac{\beta y^k-\lambda^k}{\beta-1}\}\},\\
y^{k+1}&=\operatorname{prox}_{\vert\cdot\vert/\beta}(x^{k+1}+\frac{\lambda^k}{\beta}),\\
        \lambda^{k+1}&=\lambda^k+\beta(x^{k+1}-y^{k+1}).
    \end{align*}
Note that $\bar{x}=0$ is the unique global minimizer of Problem \eqref{exp-prob1}, and the pair $(\bar{x},\bar{\lambda})$ satisfies the first-order optimal condition \eqref{first order condition1} with $\bar{\lambda}=0$. Moreover, $\nabla^2 f(\bar{x})=-1$ satisfies Assumption~\ref{assp:ADMM} with the subspace $V_{\mathcal{C}}\cap \mathcal{S}_A(\bar{x}\mid\bar{\lambda})=\{0\}$. Hence, the strong variational sufficiency holds for Problem \eqref{exp-prob1} with respect to $(\bar{x},\bar{\lambda})$. Specifically, as
\begin{align}\label{Moreau-l1}
        M^{1/\beta}_{\vert\cdot\vert}(x)=\begin{cases}
            \frac{\beta}{2}x^2,&\vert x\vert \leq \frac{1}{\beta}\\
            \vert x\vert -\frac{1}{2\beta}, &\vert x\vert >\frac{1}{\beta}
        \end{cases}~,
    \end{align}
then for any $\beta>2$, $l_{\beta}(x,\lambda)$ is $(\beta-1)$ strongly convex in $x$ relative to $[-\frac{1}{2\beta},\frac{1}{2\beta}]$ when $\lambda\in[-\frac{1}{2},\frac{1}{2}]$ and is concave in $\lambda$ when $x\in [-\frac{1}{2\beta},\frac{1}{2\beta}]$.

Figure \ref{Fig.1} displays the average results over 50 independent runs with random  initial points $(y^0,\lambda^0)$ satisfying \eqref{the:initial} in Proposition \ref{prop:local-convergence}
with $\Lambda_{\beta}=[-\frac{1}{2},\frac{1}{2}]$ for  different $\beta>2$. This figure clearly shows that the ADMM iteration \eqref{alg:classical ADMM1} converges linearly in this case, which aligns with the local linear convergence guarantee of Proposition \ref{prop-linear}. Notably, this convergence is not covered
by the existing ADMM convergence results, as the problem is
nonconvex and \(\mathcal C\neq\mathbb R^n\). 

\begin{figure}[htbp]
\centering
\includegraphics[width=1\textwidth]{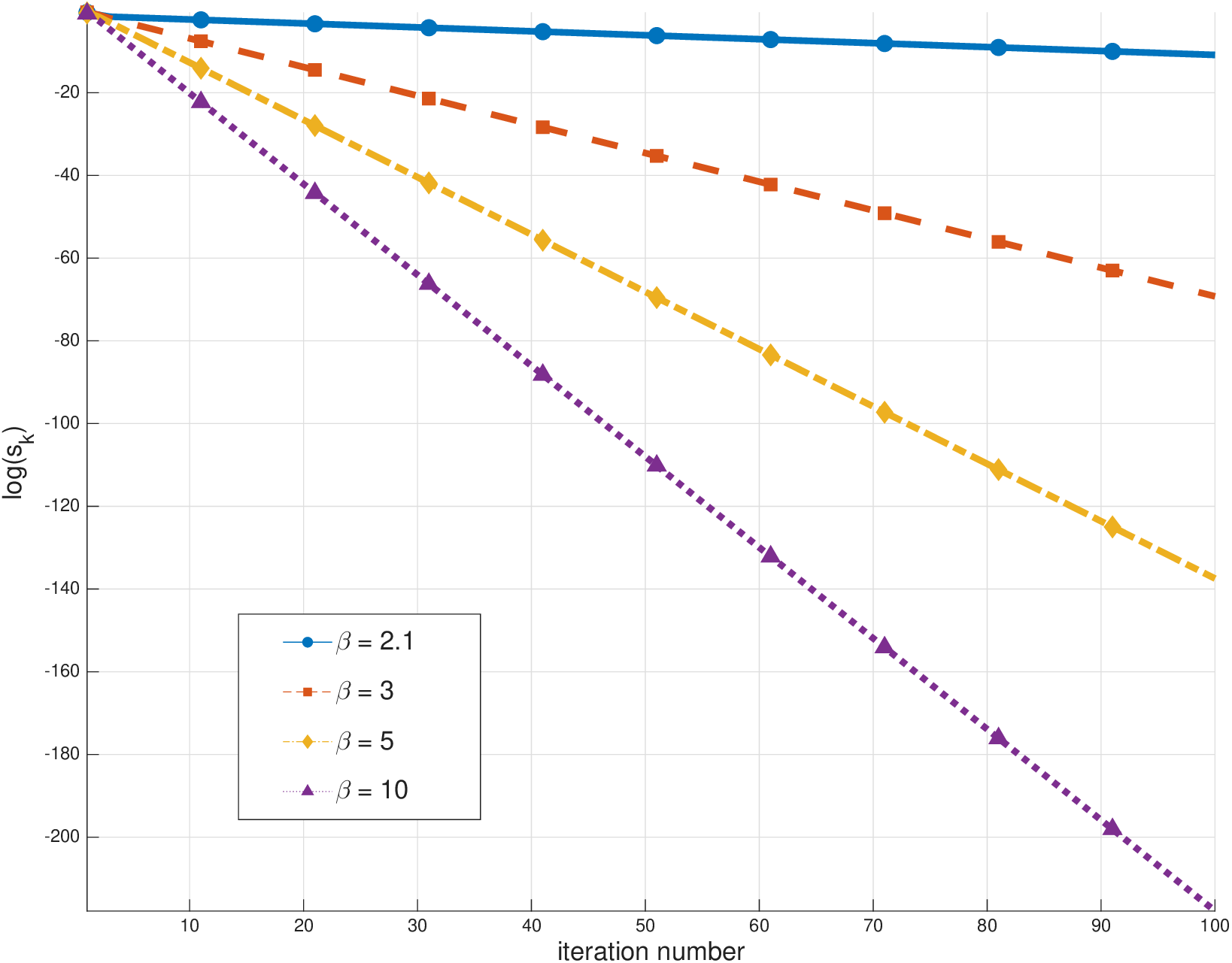}  
\captionsetup{justification=centering}   
\caption{ $\log(s_k)$ versus the iteration number, where $s_k:=\|x^k-\bar{x}\|+\|y^k-A\bar{x}\|+\|\lambda^k-\bar{\lambda}\|$.}
\label{Fig.1}
\end{figure}
\FloatBarrier
\end{example}

Next, we provide two illustrative examples to show the tightness of our local convergence analysis. 

\begin{example}
Consider the problem
\begin{align}\label{exp-prob2}
        \min_{x\in\mathbb{R}^2}~x_1\cos(x_2)+\vert x_1\vert.
    \end{align} 
    This problem can be  equivalently reformulated as 
    Problem \eqref{prob:linear-eq} with
 \[f(x_1,x_2):=x_1\cos(x_2),\quad g(y):=\vert y\vert, \quad A:=[1,0]\in\mathbb{R}^{1\times 2},\quad \mathcal{C}:=\mathbb{R}^2.\]    Note that the global minimizers of Problem \eqref{exp-prob2} form the set \[ \mathcal{M}=\mathcal{M}_1\cup\mathcal{M}_2^+\cup\mathcal{M}_2^-, \] where $\mathcal{M}_1=\{(0,t)\mid t\in\mathbb{R}\}$,  $\mathcal{M}_2^+=\{(x_1,(2k+1)\pi)\mid x_1>0,\ k\in\mathbb{Z}\}$,  and $\mathcal{M}_2^-=\{(x_1,2k\pi)\mid x_1<0,\ k\in\mathbb{Z}\}$. 
Let $\bar{x}:=[0,0]^{\top}$ and $\bar{\lambda}:=-1$. The pair $(\bar{x},\bar{\lambda})$ satisfies the first-order optimal condition \eqref{first order condition1}, but fails to satisfy Assumption~\ref{assp:ADMM}, since $\nabla^2 f(\bar{x})$ is not positive definite on $\mathcal{S}_A(\bar{x}\mid \bar{\lambda})=\mathbb{R}^2$. In this case, ADMM scheme~\eqref{alg:classical ADMM1} may generate a nonconvergent
sequence. Indeed, for any \(\beta>0\), let
\((y^0,\lambda^0)=(0,-1)\). By suitably choosing the minimizers
of the \(x\)-subproblems, the generated sequence satisfies
\begin{align*}
    (x^{2k+1},y^{2k+1},\lambda^{2k+1})
	&=
	\left(\left(\frac{2}{\beta},\pi\right),0,1\right),\\
	(x^{2k+2},y^{2k+2},\lambda^{2k+2})
	&=
	\left(\left(-\frac{2}{\beta},0\right),0,-1\right)
\end{align*}
for all \(k=0,1,\ldots\). Hence, without Assumption~\ref{assp:ADMM}(ii), ADMM may fail to converge
even when \((y^0,\lambda^0)=(A\bar x,\bar\lambda)\).
\end{example}

\begin{example}
Consider the problem \begin{align}\label{exp-prob3}
        \min_{x\in[-1,1]}~-\frac{1}{3}\vert x\vert ^3+\vert x\vert .
    \end{align} 
    This problem can be  equivalently reformulated as 
    Problem \eqref{prob:linear-eq} with 
    \[f(x):=-\frac{1}{3}\vert x\vert ^3, \quad g(y):=\vert y\vert ,\quad A:=[1],\quad \mathcal{C}:=[-1,1].\] 
Note that $\bar{x}=0$ is the unique global minimizer of Problem \eqref{exp-prob3}, and the pair $(\bar{x},\bar{\lambda})$ satisfies the first-order optimal condition \eqref{first order condition1} with $\bar{\lambda}=0$. Moreover, $\nabla^2 f(\bar{x})=0$ satisfies Assumption~\ref{assp:ADMM} with the subspace $V_{\mathcal{C}}\cap \mathcal{S}_A(\bar{x}\mid\bar{\lambda})=\{0\}$. Hence,  the strong variational sufficiency holds for Problem \eqref{exp-prob3} with respect to $(\bar{x},\bar{\lambda})$. Specifically, by \eqref{Moreau-l1}, for any $\beta>1$, $l_{\beta}(x,\lambda)$ is $(\beta-1/\beta)$ strongly convex in $x$ relative to $[-\frac{1}{2\beta},\frac{1}{2\beta}]$ when $\lambda\in[-\frac{1}{2},\frac{1}{2}]$ and is concave in $\lambda$ when $x\in [-\frac{1}{2\beta},\frac{1}{2\beta}]$. For a given $\beta>1$, if the initial point $(y^0,\lambda^0)$ does not satisfy \eqref{the:initial}, the sequence generated by \eqref{alg:classical ADMM1} for this problem may fail to converge. For example, let $\beta=2$ and take the initial point $(y^0,\lambda^0):=(0,-1)$. Then, the generated sequence forms a nonconvergent two-cycle:
\[
	(x^{2k+1},y^{2k+1},\lambda^{2k+1})=(1,0,1),
	\qquad
	(x^{2k+2},y^{2k+2},\lambda^{2k+2})=(-1,0,-1),
\]
for all \(k=0,1,\ldots\). We conclude that the global convergence of ADMM scheme \eqref{alg:classical ADMM1} cannot be guaranteed under Assumption~\ref{assp:ADMM}.
\end{example}

\subsection{Verification for a class of quadratic programs}

We next analyze the quadratic programming model~\eqref{QP} introduced in
Section~\ref{sec-intro}. Let
\[
	\mathcal{Q}
	:=
	\{x\in\mathbb R^n\mid Ex=e,\ Gx\leq h\}.
\]
As observed in the introduction, Problem~\eqref{QP} is a special case of
Problem~\eqref{prob:com} with
\[
	f(x):=\frac{1}{2}x^\top Hx+b^\top x,\qquad
	g(y):=\delta_{\mathcal{Q}}(y),\qquad
	A=I_n,\qquad
	\mathcal C=\mathbb R^n.
\]
Let \(\bar x\) be a local minimizer of Problem~\eqref{QP}, and let
\(\bar\lambda\in N_{\mathcal{Q}}(\bar x)\) satisfy
\[
	H\bar x+b+\bar\lambda=0.
\]
Since \(\nabla^2 f(x)=H\) for all \(x\in\mathbb R^n\), the Hessian
continuity requirement in Assumption~\ref{assp:ADMM}(ii) is automatically satisfied.
It remains to verify the positive-definiteness requirement on
\(\mathcal S_A(\bar x\mid\bar\lambda)\).
Let
\[
	I(\bar x):=\{i\mid G_i\bar x=h_i\}
\]
be the active index set. Since \(g=\delta_{\mathcal{Q}}\), it follows
from the definition of \(T_g\) that
\[
	T_g(\bar x\mid\bar\lambda)
	=
	T_{\delta_{\mathcal{Q}}}(\bar x\mid\bar\lambda)
	=
	\left\{
	w\in\mathbb R^n
	\ \mid\
	Ew=0,\;
	G_iw\leq0\ \text{for all }i\in I(\bar x),\;
	\langle\bar\lambda,w\rangle=0
	\right\}.
\]
Because \(A=I_n\) and \(\mathcal C=\mathbb R^n\), we have
\(V_{\mathcal C}=\mathbb R^n\) and
\[
	\begin{aligned}
	V_{\mathcal C}\cap
	\mathcal S_A(\bar x\mid\bar\lambda)
	=
	\mathcal S_A(\bar x\mid\bar\lambda)=
	T_{\delta_{\mathcal Q}}(\bar x\mid\bar\lambda)
	-
	T_{\delta_{\mathcal Q}}(\bar x\mid\bar\lambda)=
	\operatorname{span}
	T_{\delta_{\mathcal Q}}(\bar x\mid\bar\lambda),
	\end{aligned}
\]
where the last equality follows from the fact that
\(T_{\delta_{\mathcal Q}}(\bar x\mid\bar\lambda)\) is a convex cone. Therefore, the second-order requirement in Assumption~\eqref{assp:ADMM}(ii) reduces to
\[
	v^\top Hv>0
	\qquad
	\text{for every }
	0\neq v\in
	\operatorname{span}
	T_{\delta_{\mathcal Q}}(\bar x\mid\bar\lambda).
\]
Consequently, if \(H\) is positive definite on the linear span of
\(T_{\delta_{\mathcal Q}}(\bar x\mid\bar\lambda)\), then
Assumption~\ref{assp:ADMM} holds at \((\bar x,\bar\lambda)\). Since \(\mathcal C=\mathbb R^n\) is a polyhedral convex set, Proposition~\ref{prop-linear} yields the local linear convergence of the ADMM sequence, under the initialization and local trajectory conditions of
Proposition~\ref{prop:local-convergence}. This demonstrates
that the local theory developed in this paper applies to a practically
relevant class of possibly nonconvex quadratic programs.

\section{Conclusion}\label{sec-conclusion}
In this paper, we studied the local convergence of the standard ADMM scheme for a class of nonconvex composite optimization problems with convex constraints, where the objective consists of a smooth, possibly nonconvex, term and a polyhedral convex nonsmooth term composed with a linear mapping. Motivated by recent developments in variational analysis, we extended the notion of
strong variational sufficiency to this setting and provided an elementary, self-contained proof of a local strong convexity property of the Moreau envelope of polyhedral convex functions on the orthogonal complement of an appropriate subspace. This property, characterized by a local projection-based Hessian lower bound, allows us to verify the strong variational sufficiency of the reduced augmented Lagrangian under a suitable second-order
condition. It further leads to a descent inequality for the ADMM iterates that closely parallels the classical convex ADMM analysis. As a consequence, we established the local convergence of the ADMM scheme to a stationary primal-dual point for sufficiently large penalty parameters under suitable initialization and local trajectory conditions. When the constraint set is polyhedral convex, we further obtained Q-linear convergence of a weighted distance to the local primal-dual
solution set and R-linear convergence of the primal sequence.

Overall, this work clarifies the role of hidden convexity and variational structure in the local convergence of ADMM for nonconvex composite problems with constraints. It complements existing global convergence results based on KL-type arguments or error bounds, and provides a principled local theory aligned with recent advances in variational analysis. Possible directions for future research include extending the present framework to more general nonsmooth regularizers beyond the polyhedral case, deriving explicit contraction factors and sharper local rate estimates, and exploring the interaction between strong variational sufficiency and stochastic or inexact variants of ADMM arising in large-scale imaging and machine learning applications.

\backmatter


\bmhead{Authors' contributions}
X.-Y. Xie and Q. Li wrote the main manuscript text; all authors reviewed the manuscript.

\bmhead{Conflict of interest}
The authors declare no conflict of interest.




\end{document}